\newtheorem{theorem}{Theorem}
\newtheorem{proposition}[theorem]{Proposition}
\newtheorem{lemma}[theorem]{Lemma}
\newtheorem{corollary}[theorem]{Corollary}
\newtheorem{definition}[theorem]{Definition}
\theoremstyle{definition}
\newtheorem{example}[theorem]{Example}
\title{Iterated altans and their properties}
\author{Nino Ba\v si\' c\footnote{\textit{Faculty of Mathematics and Physics, University of Ljubljana, Slovenia}, e-mail: \texttt{nino.basic@fmf.uni-lj.si}} \and Toma\v z Pisanski\footnote{\textit{FAMNIT, University of Primorska, Slovenia}, e-mail: \texttt{tomaz.pisanski@upr.si}}}
\date{April 30, 2015}
\begin{document}

\nocite{*} 

\maketitle

\begin{abstract}
Recently a class of molecular graphs, called \emph{altans}, became a focus of attention of several theoretical chemists and mathematicians. In this paper we study primary iterated altans and show, among other things, their connections with nanotubes and nanocaps. The question of classification of bipartite altans is also addressed. Using the results of Gutman we are able to enumerate Kekul\'{e} structures of several nanocaps of arbitrary length.
\end{abstract}

\noindent \textbf{Keywords:} altan, benzenoid, Kekul\'{e} structure, nanotube, nanocap.

\vspace{0.5\baselineskip}
\noindent \textbf{Math.\ Subj.\ Class.\ (2010):} 92E10, 05C90

\section{Introduction}
Altans were first introduced as special planar systems, obtained from benzenoids by attachment of a ring to all outer vertices of valence two \cite{gutman2014,monacozanasi2009}, in particular in connection with concentric decoupled nature of the ring currents (see papers by Zanasi et al.\  \cite{monacomemoli2013,monacozanasi2009} and also Mallion and Dickens \cite{dickensmallion2014a,dickensmallion2014}). The graph-theoretical approach to ring current was initiated by Milan Randi\'{c} in 1976 \cite{randic1976}. It was also studied by Gomes and Mallion \cite{gomes1979}. Full description is provided for instance in  \cite{randic2003, randic2010}. Moreover, see paper \cite{fowlermyrvold2011} by Patrick Fowler and Wendy Myrvold on `The Anthracene Problem'. 

Later altans were generalized by Ivan Gutman \cite{gutman2014a} to arbitrary graphs. We essentially follow Gutman's approach. Our point of departure is a \emph{peripherally rooted graph}, i.e. an ordered pair $(G, S)$ which consists of an arbitrary graph $G$ and a cyclically ordered subset $S$ of its vertices, called the \emph{peripheral root}. 

Let $n$ denote the order of $G$ and let $k$ denote the cardinality of $S$. Assume that $V(G) = \{0, 1, \ldots, n-1\}$.
The operation $A(G, S)$ maps the pair $(G, S)$ to a new pair $(G_1, S_1)$ as follows:
Let $S_0 = \{n, n+1, \ldots, n+k-1\}$ and $S_1 = \{n+k, n+k+1, \ldots, n+2k-1\}$.
Let the vertex set of $G$ be augmented by $S_0 \cup S_1$. Through the vertices $S_0 \cup S_1$, we construct a \emph{peripheral} cycle graph $C$ of length $2k$ in the cyclic order
$$
(n, n+k, n+1, n+k+1, n+2, \ldots, n+k-1, n+2k-1, n).
$$
Finally, we attach $C$ to $G$ by $k$ edges between $S$ and $S_0$ of the form $(s_i, n+i)$, $0 \leq i < k$, where $s_i$ is the $i$-th vertex of $S$. The vertices of $C$ that have valence 2  in the final construction form are exactly the ones originating from $S_1$ and are the new peripheral root of the altan. The new peripheral root, $S_1$, is ordered in the natural way.

\begin{example}
\label{ex1}
A bipartite graph may give rise to non-bipartite or bipartite altans. Let $G = C_6$ and $S = (0, 1, 2, 3, 4, 5)$.
Graph $G$ and the non-bipartite $A(G,S)$ are depicted in Figure~\ref{fig:example1}.

\begin{figure}[!htbp]
\centering
\subfigure[$G = C_6$, $S = (0, 1, 2, 3, 4, 5)$]{
\label{fig:example1sub1}
\begin{tikzpicture}[scale=0.5]
\tikzstyle{every node} = [inner sep=2, draw, circle, fill=blue!50]
\tikzstyle{edge} = [draw, line width=1.0]
\pgfmathsetmacro{\k}{360 / 6};
\foreach \i in {0,...,5} {
  \node[label={-90+\k*\i}:$\i$] (u_\i) at (-90+\k*\i:2.0) {};
}
\foreach \i in {0,...,5} {
  \pgfmathtruncatemacro{\j}{mod(\i+1, 6)};
  \draw[edge] (u_\i) -- (u_\j);
}
\end{tikzpicture}
}
\quad\quad
\subfigure[$A(G, S) = (G_1, S_1)$. $S_1 = (12, 13, 14, 15, 16, 17)$ consists of vertices of valence 2 in the natural cyclic order.]{
\label{fig:example1sub2}
\begin{tikzpicture}[scale=0.5]
\tikzstyle{every node} = [inner sep=2, draw, circle, fill=blue!50]
\tikzstyle{edge} = [draw, line width=1.0]
\pgfmathsetmacro{\k}{360 / 6};
\foreach \i in {0,...,5} {
   \pgfmathtruncatemacro{\j}{\i+6};
   \pgfmathtruncatemacro{\jj}{\i+12};
 \node[label=60-90+\k*\i:\i] (u_\i) at (-90+\k*\i:2.0) {};
  \node[label=-90+\k*\i:\j] (v_\i) at (-90+\k*\i:3.7) {};
  \draw[edge] (u_\i) -- (v_\i);
  \node[label=-90+\k*\i+360/12:\jj] (w_\i) at (-90+\k*\i+360/12:4.5) {};
}
\foreach \i in {0,...,5} {
  \pgfmathtruncatemacro{\j}{mod(\i+1, 6)};
  \draw[edge] (u_\i) -- (u_\j);
  \draw[edge] (v_\i) -- (w_\i) -- (v_\j);
}
\end{tikzpicture}
}
\caption{Altan of the benzene is not bipartite since $A(G,S)$ contains pentagons.}
\label{fig:example1}
\end{figure}
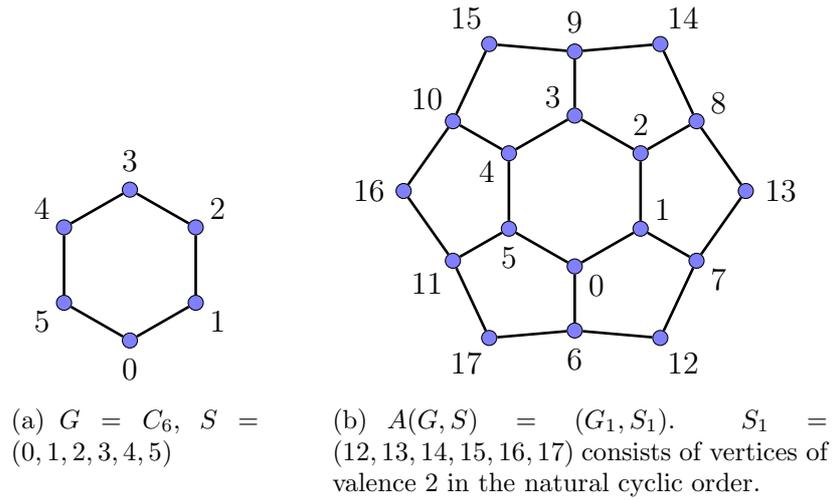
\end{example}

\begin{example}
\label{ex2}
\noindent The altan of the graph $G$ in Figure~\ref{fig:example2} is bipartite.

\begin{figure}[!htbp]
\centering
\subfigure[$G$, $S = (a, b, c)$]{
\label{fig:example2sub1}
\begin{tikzpicture}[scale=0.5]
\tikzstyle{every node} = [inner sep=2, draw, circle, fill=blue!50]
\tikzstyle{edge} = [draw, line width=1.0]
\node (s) at (0, 0) {};
\pgfmathsetmacro{\k}{360 / 6};
\foreach \i in {0,...,2} {
  \node (u_\i) at (90+2*\k*\i:2.0) {};
  \draw[edge] (u_\i) -- (s);
}
\node[label=30:$b$] (v_0) at (30:2.0) {};
\node[label={30+2*\k*1}:$c$] (v_1) at (30+2*\k*1:2.0) {};
\node[label={30+2*\k*2}:$a$] (v_2) at (30+2*\k*2:2.0) {};
\foreach \i in {0,...,2} {
  \pgfmathtruncatemacro{\j}{mod(\i+1, 3)};
  \draw[edge] (v_\i) -- (u_\i) -- (v_\j);
}

\end{tikzpicture}
}
\subfigure[$G$, $S_1 = (a', b', c')$]{
\label{fig:example2sub2}
\begin{tikzpicture}[scale=0.5]
\tikzstyle{every node} = [inner sep=2, draw, circle, fill=blue!50]
\tikzstyle{edge} = [draw, line width=1.0]
\node (s) at (0, 0) {};
\pgfmathsetmacro{\k}{360 / 6};
\foreach \i in {0,...,2} {
  \node (u_\i) at (90+2*\k*\i:2.0) {};
  \draw[edge] (u_\i) -- (s);
}
\node (v_0) at (30:2.0) {};
\node (v_1) at (30+2*\k*1:2.0) {};
\node (v_2) at (30+2*\k*2:2.0) {};
\foreach \i in {0,...,2} {
  \pgfmathtruncatemacro{\j}{mod(\i+1, 3)};
  \draw[edge] (v_\i) -- (u_\i) -- (v_\j);
}
  \node[label=90:$b'$] (a_0) at (90+2*\k*0:4.0) {};
  \node[label=180+30:$c'$] (a_1) at (90+2*\k*1:4.0) {};
  \node[label=-30:$a'$] (a_2) at (90+2*\k*2:4.0) {};
\foreach \i in {0,...,2} {
  \node (b_\i) at (30+2*\k*\i:4.0) {};
}
\foreach \i in {0,...,2} {
  \pgfmathtruncatemacro{\j}{mod(\i+1, 3)};
  \draw[edge] (b_\j) -- (a_\i) -- (b_\i) -- (v_\i);
}
\end{tikzpicture}
}
\caption{Since $G$ is bipartite and all vertices of $S$ 
belong to the same set of bipartition, $A(G,S)$ is bipartite as well.}
\label{fig:example2}
\end{figure}
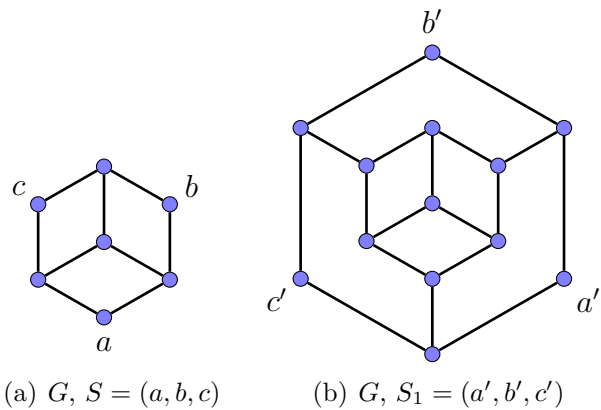
\end{example}
Note that the altan in Example~\ref {ex1} is non-bipartite while the one in Example~\ref{ex2} is bipartite. We can classify bipartite altans.

\begin{theorem}
\label{thm1}
Let $(G, S)$ be a graph $G$ with a peripheral root $S$. The altan $A(G, S)$ is bipartite if and only if
\begin{compactenum}
\renewcommand{\labelenumi}{\alph{enumi})}
\item $G$ is bipartite and
\item members of $S$ belong to the same bipartition set.
\end{compactenum}
\end{theorem}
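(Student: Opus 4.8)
The plan is to recall that a graph is bipartite if and only if it contains no odd cycle, and to organize the proof around this criterion. The altan $A(G,S)$ is built from $G$ by attaching a peripheral cycle $C$ of length $2k$ together with $k$ ``spoke'' edges of the form $(s_i, n+i)$ connecting $S$ to $S_0$. So I would first understand the local structure of these new pieces and how any cycle in $A(G,S)$ decomposes.

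For the direction ``(a) and (b) imply $A(G,S)$ bipartite'', I would construct an explicit $2$-colouring. Fix a proper $2$-colouring of $G$ (possible by (a)), and by (b) assume every vertex of $S$ receives colour, say, black. The plan is to assign colours to the new vertices consistently: give each spoke-attached vertex $n+i \in S_0$ the colour white (opposite to its neighbour $s_i$), and then $2$-colour the even cycle $C$ starting from the $S_0$ vertices. Since $C$ has even length $2k$, it is bipartite on its own, so I only need to check that the colouring induced on $C$ by the spokes is consistent with a proper $2$-colouring of $C$. The key point is that in the prescribed cyclic order of $C$, the $S_0$ vertices $n, n+1, \ldots, n+k-1$ and the $S_1$ vertices alternate, so the $S_0$ vertices all lie on the same side of the bipartition of $C$; assigning them all white is therefore consistent, and the whole colouring is proper.

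For the converse, ``$A(G,S)$ bipartite implies (a) and (b)'', I would argue contrapositively. If (a) fails, then $G$ contains an odd cycle, which is still an odd cycle in $A(G,S)$ (since $G$ is an induced subgraph), so $A(G,S)$ is not bipartite. If (a) holds but (b) fails, then there exist $s_i, s_j \in S$ in different bipartition sets of $G$, so they are joined in $G$ by a path of odd length. The plan is to close this odd path into a cycle through the peripheral structure: follow the spokes from $s_i$ to $n+i$ and from $s_j$ to $n+j$ (two edges), and then a path along $C$ between $n+i$ and $n+j$. I would then compute the parity of the length of the resulting closed walk and show it is odd, exhibiting an odd cycle in $A(G,S)$.

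The main obstacle will be the parity bookkeeping on the peripheral cycle $C$ in the converse direction. Because the cyclic order on $C$ interleaves $S_0$ and $S_1$ vertices, the distance along $C$ between two spoke-attachment vertices $n+i$ and $n+j$ is $2|i-j|$ when measured the short way (each ``step'' between consecutive $S_0$ vertices passes through one $S_1$ vertex), and I must verify that adding the two spoke edges and this even path to the odd $G$-path yields an odd closed walk regardless of the indices $i,j$. Care is needed to choose a genuine cycle (not merely a closed walk) and to track parities correctly, but once the two elementary distances (odd path in $G$, even path in $C$, plus two spoke edges) are added the total is odd, completing the argument.
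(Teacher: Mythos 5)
Your proposal is correct and follows essentially the same route as the paper: an explicit two-colouring (with $S$ black, $S_0$ white, $S_1$ black) for the forward direction, and for the converse an odd $s_i,s_j$-path in $G$ closed up into an odd cycle by the two spokes plus an even path along $C$. The only cosmetic difference is that the paper specializes to two \emph{consecutive} differently-coloured vertices of $S$, so the connecting path through $C$ has length exactly $4$, whereas you allow an arbitrary pair and a general even connector of length $2|i-j|+2$; the parity bookkeeping works out identically in both versions.
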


\begin{proof}
First, we will show that conditions a) and b) imply that $A(G, S) = (G_1, S_1)$ is bipartite. Let $G$ be bipartite and let us colour its vertices black and white. We may assume that $S$ has all of its vertices coloured black. Their adjacent vertices, $S_0$, can be coloured white. This can be further extended, by colouring $S_1$ black, to a proper black and white vertex colouring of $G_1$. Hence $G_1$ is bipartite. Furthermore, $(G_1, S_1)$ also satisfies the conditions a) and b) of the Theorem.

Now we will show the vice versa. Let $A(G, S)$ be bipartite. 
Graph $G$ is a subgraph of $G_1$, so it is bipartite as well.
If $G$ is bipartite but not all vertices of $S$ are coloured with the same colour, then two consecutive vertices of $S$, say $u$ and $v$, would be coloured differently. Recall that vertices of $S$ are cyclically ordered. Hence there is a $u,v$-path in $G$ of odd length. By attaching the cycle $C$ to $G$ to form $A(G,S)$ it is possible to connect $u$ to $v$ by a path of length $4$.
This means there is a cycle of odd length in graph $G_1$, a contradiction. 
\end{proof}
From the definion of the altan operation it follows that we may repeat it several times. Let $A^n(G, S)$ denote the $n$-th altan of $(G, S)$, i.e.\ $\underbrace{A(A(\cdots A}_{n}(G, S)\cdots))$. We obtain the following consequence of Theorem~\ref{thm1}.

\begin{corollary}
\label{cor1}
Let $(G, S)$ be a graph with peripheral root $S$ and let $n \geq 1$ be an arbitrary integer. $A^n(G, S)$ is bipartite if and only if  $(G, S)$ satisfies the conditions of Theorem \ref{thm1}.
\end{corollary}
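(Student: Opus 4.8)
The plan is to argue by induction on $n$, treating the two implications separately, so that the whole corollary reduces to Theorem~\ref{thm1} together with the observation, already recorded in its proof, that the altan operation preserves conditions a) and b).

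For the \emph{if} direction I will suppose $(G,S)$ satisfies a) and b), and prove the slightly stronger statement that $A^k(G,S)$ again satisfies a) and b) for every $k \geq 0$ (with the convention $A^0(G,S) = (G,S)$), by induction on $k$. The base case $k = 0$ is just the hypothesis. For the inductive step I would write $(H,T) = A^k(G,S)$; the inductive hypothesis says $(H,T)$ satisfies a) and b), so the first half of the proof of Theorem~\ref{thm1} applies to $(H,T)$ and tells us that $A(H,T) = A^{k+1}(G,S)$ is bipartite and, as noted there, again satisfies a) and b). Specializing to $k = n$ shows in particular that $A^n(G,S)$ satisfies condition a), i.e.\ that its underlying graph is bipartite.

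For the \emph{only if} direction I would instead exploit the monotonicity of the construction. Each application of $A$ only adds vertices (those of the peripheral cycle $C$) and edges (the cycle edges together with the $k$ attaching edges) and removes nothing, so the graph underlying $A(G,S)$ is a subgraph of the graph underlying $A^n(G,S)$ for every $n \geq 1$. Since any subgraph of a bipartite graph is bipartite, if $A^n(G,S)$ is bipartite then so is $A(G,S)$. Theorem~\ref{thm1} then forces $(G,S)$ to satisfy a) and b), which is exactly the claim.

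I do not expect a serious obstacle here, since all the real content is already carried by Theorem~\ref{thm1}; the proof is essentially bookkeeping. The two facts that have to be stated cleanly and applied in the correct direction are that conditions a) and b) are invariant under $A$ (used forward) and that $A^n(G,S)$ contains $A(G,S)$ as a subgraph for $n \geq 1$ (used backward). The only points needing care are therefore fixing the convention $A^0 = \mathrm{id}$, getting the base case of the induction right, and observing that ``$A^n(G,S)$ satisfies condition a)'' is literally the assertion that its underlying graph is bipartite.
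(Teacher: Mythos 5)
Your proof is correct and follows essentially the same route as the paper's (one-line) argument: induction on $n$ with Theorem~\ref{thm1} as the base case, using the observation recorded in its proof that conditions a) and b) are preserved by the altan operation. Your shortcut for the backward direction --- that $A(G,S)$ is a subgraph of $A^n(G,S)$, so bipartiteness descends and Theorem~\ref{thm1} applies --- is a harmless, slightly cleaner variant of running the induction in reverse.
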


\begin{proof}
It follows by induction. The basis of induction is given by Theorem \ref{thm1}.
\end{proof}

\section{Altans of benzenoid systems}

Let $B$ be a finite benzenoid system. The altan of $B$ is assumed to have the cyclically ordered peripheral vertices, $S$, of valence 2. Their order is obtained by traversing $B$ along its perimeter.

\begin{theorem}
\label{thm2}
For any finite benzenoid $B$ its altan $A(B)$ is non-bipartite.
\end{theorem}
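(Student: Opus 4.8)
The plan is to reduce the statement to Theorem~\ref{thm1} and then to analyse the colours of the degree-two boundary vertices directly. Every benzenoid system $B$ is a subgraph of the hexagonal lattice, hence bipartite, so condition a) of Theorem~\ref{thm1} is automatically satisfied for the pair $(B, S)$, where $S$ is the cyclically ordered set of perimeter vertices of valence $2$. Consequently, by Theorem~\ref{thm1}, the altan $A(B)$ is bipartite if and only if condition b) holds, i.e.\ if and only if all vertices of $S$ lie in the same bipartition class of $B$. Thus the whole theorem reduces to a single claim: \emph{the valence-two perimeter vertices of $B$ are not all of the same colour}, where we fix the proper $2$-colouring of $B$ inherited from the bipartition of the hexagonal lattice.

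To establish this claim I would argue with extreme vertices of a fixed plane drawing of $B$. We may orient the drawing so that every hexagon has a horizontal top edge and a horizontal bottom edge (a ``flat-top'' position), and colour the vertices black and white according to the lattice bipartition. Let $e = uw$ be a topmost horizontal edge of $B$, that is, the top edge of a hexagon attaining the maximal $y$-coordinate. I would first check that both endpoints of $e$ have valence $2$: apart from $e$, the lattice edges at $u$ consist of one descending edge and one edge ascending to the left, and the latter cannot be present, since its far endpoint would lie strictly above the maximal height; the same holds at $w$ by symmetry. Hence $u, w \in S$. Since $u$ and $w$ are adjacent, they receive different colours, so $S$ is not monochromatic. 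By the reduction above, $A(B)$ is therefore non-bipartite.

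The step I expect to be the main obstacle is the geometric bookkeeping in the second paragraph: making precise that a topmost horizontal edge exists, that its endpoints genuinely lie on the perimeter and have valence exactly two (using that $B$ has minimum degree $2$ and that nothing lies above the maximal row), and that these conclusions are unaffected by ties among equally high edges. Should the purely local extremal argument prove awkward to state cleanly, I would fall back on a global \emph{turning} argument. Traversing the perimeter counterclockwise, each valence-two vertex contributes a left turn of $+60^\circ$ and each valence-three vertex a right turn of $-60^\circ$, and the total turning of a simple closed curve equals $+360^\circ$, giving $n_2 - n_3 = 6$. Encoding the edge direction modulo $120^\circ$ recovers the colour and shows it alternates along the perimeter; a monovariant (lifting) argument on the resulting closed walk then shows that the required positive winding is incompatible with all left turns emanating from a single colour class, which again yields the claim and hence the theorem.
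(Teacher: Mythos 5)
Your reduction and your extremal argument are both sound, and the end result is a correct proof. The comparison with the paper is as follows. The paper's primary proof does not go through Theorem~\ref{thm1} at all: it cites Gutman (1989) for the fact that every finite benzenoid has two \emph{consecutive} peripheral vertices $u,v$ of valence~2, and then exhibits an explicit odd cycle $u\,v\,v'\,w'\,u'$ of length~5 in $A(B)$ built from the two attachment edges and the arc of the new peripheral cycle between $u'$ and $v'$; only in a closing remark does it note that the statement also follows from Theorem~\ref{thm1} because $S$ is not monochromatic. Your proof takes that remark as the main route and, more substantially, replaces the citation by a self-contained proof of the key combinatorial fact: in a flat-top drawing, both endpoints of a topmost horizontal edge have their third lattice neighbour strictly above the maximal height, hence both have valence~2, are adjacent, and so receive opposite colours. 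This is exactly the content of the cited lemma, so the two proofs hinge on the same fact; what yours buys is independence from the external reference (at the cost of the geometric bookkeeping you flag, all of which does go through: the maximal $y$-coordinate of a union of hexagons is attained along the top edge of some hexagon, and ties are harmless since any one topmost edge suffices). The paper's version buys a concrete odd cycle of length~5, which is slightly more informative than bipartiteness failing in the abstract, and matches the ``pentagon'' picture of Example~\ref{ex1}. Your fallback turning argument ($n_2-n_3=6$ and a winding/lifting analysis) is only sketched and is not needed given that the extremal argument is complete; I would drop it rather than try to make it precise.
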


\begin{proof}
By \cite{gutman1989} each finite benzenoid has two consecutive peripheral vertices of valence 2, say $u$ and $v$. Both vertices $u$ and $v$ are attached to the outer cycle $C$ of $A(B)$ to vertices $u'$ and $v'$ respectively that are two-apart in $C$. Let $w'$ be the vertex on $C$ adjacent to both $u'$ and $v'$. Vertices $uvv'w'u'$ form a cycle of length 5. Altan $A(B)$ is not bipartite.

Note that the result follows also from Theorem 1. Although $B$ is bipartite, the corresponding peripheral root $S$ is not all coloured with the same color.
\end{proof}

For a bipartite graph $G$ and a peripheral root $S$ we may define partition $S_b$ and $S_w$ with black and white coloured sets. For a connected $G$ the partition is unique. We can consider two bipartite altans $A(G, S_b)$ and $A(G, S_w)$. In case of benzenoids the definition is natural and the bipartite altans are determined by the benzenoid itself.

\begin{example}
Black and white altans may be isomorphic (see Figures~\ref{fig:example3} and \ref{fig:example4}) or not (see Figure~\ref{fig:example4a}.)
\begin{figure}[!htbp]
\centering
\subfigure[$B$]{
\label{fig:example3sub1}
\begin{tikzpicture}[scale=0.5]
\tikzstyle{every node} = [inner sep=2, draw, circle, fill=white]
\tikzstyle{edge} = [draw, line width=1.0]
\pgfmathsetmacro{\k}{360 / 6};
\foreach \i in {0,...,2} {
  \node (u_\i) at (150+2*\k*\i:2.0) {};
  \node[fill=black] (v_\i) at (90+2*\k*\i:2.0) {};
}
\foreach \i in {0,...,2} {
  \pgfmathtruncatemacro{\j}{mod(\i+1, 3)};
  \draw[edge] (v_\i) -- (u_\i) -- (v_\j);
}
\end{tikzpicture}
}
\subfigure[$A_w(B) \cong A_b(B)$]{
\label{fig:example3sub2}
\begin{tikzpicture}[scale=0.5]
\tikzstyle{every node} = [inner sep=2, draw, circle, fill=white]
\tikzstyle{edge} = [draw, line width=1.0]
\pgfmathsetmacro{\k}{360 / 6};
\foreach \i in {0,...,2} {
  \node (u_\i) at (150+2*\k*\i:2.0) {};
  \node[fill=black] (v_\i) at (90+2*\k*\i:2.0) {};
}
\foreach \i in {0,...,2} {
  \pgfmathtruncatemacro{\j}{mod(\i+1, 3)};
  \draw[edge] (v_\i) -- (u_\i) -- (v_\j);
}
\foreach \i in {0,...,2} {
  \node[fill=black] (a_\i) at (150+2*\k*\i:4.0) {};
  \node (b_\i) at (90+2*\k*\i:4.0) {};
}
\foreach \i in {0,...,2} {
  \pgfmathtruncatemacro{\j}{mod(\i+1, 3)};
  \draw[edge] (b_\j) -- (a_\i) -- (b_\i) -- (v_\i);
}
\end{tikzpicture}
}
\caption{Black and white altans are isomorphic: $A_w(B) \cong A_b(B)$.}
\label{fig:example3}
\end{figure}
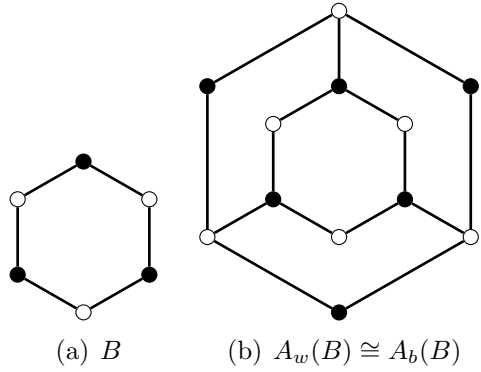



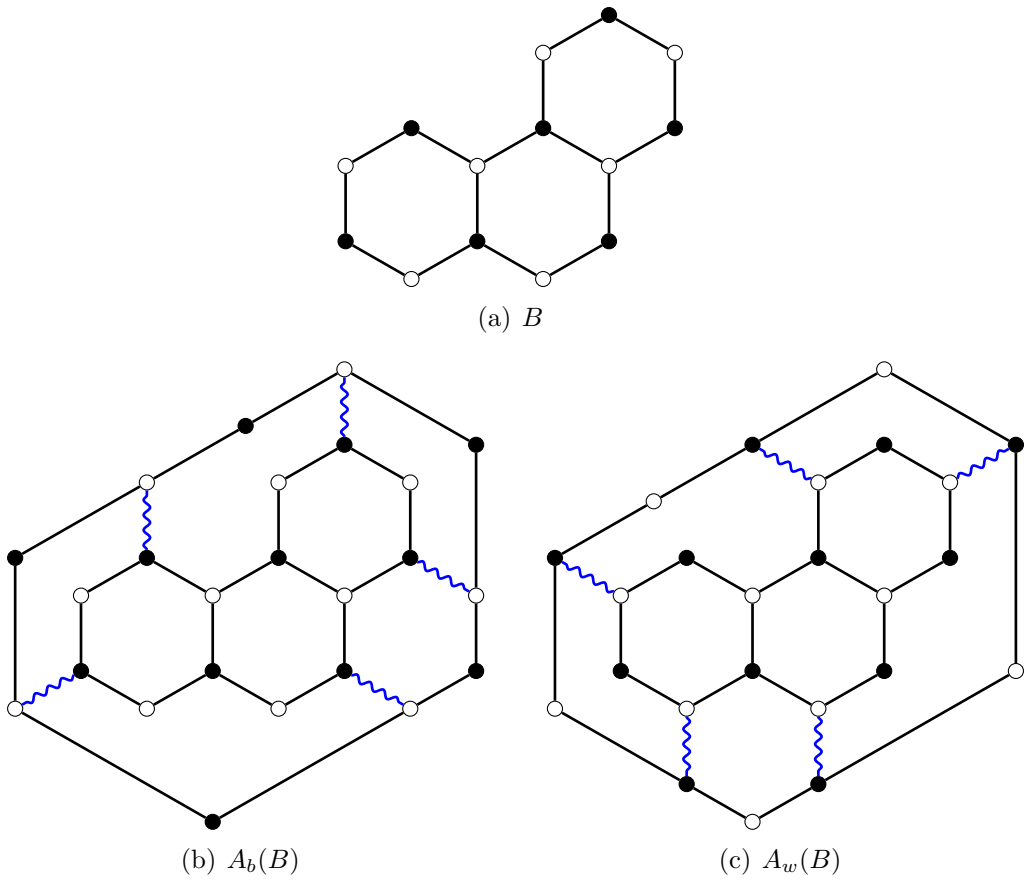
\begin{figure}[!htbp]
\centering
\subfigure[$B$]{
\label{fig:example4sub1}
\usetikzlibrary{calc}
\begin{tikzpicture}[scale=0.5]
\tikzstyle{every node} = [inner sep=2, draw, circle, fill=white]
\tikzstyle{edge} = [draw, line width=1.0]
\pgfmathsetmacro{\k}{360 / 6};
\foreach \i in {0,...,2} {
  \node (u_\i) at (150+2*\k*\i:2.0) {};
  \node[fill=black] (v_\i) at (90+2*\k*\i:2.0) {};
}
\foreach \i in {0,...,2} {
  \pgfmathtruncatemacro{\j}{mod(\i+1, 3)};
  \draw[edge] (v_\i) -- (u_\i) -- (v_\j);
}
\foreach \i in {0,...,1} {
  \node (u2_\i) at ($sqrt(3)*(2, 0) + (-90+2*\k*\i:2.0) $) {};
  \node[fill=black] (v2_\i) at ($sqrt(3)*(2, 0) + (-30+2*\k*\i:2.0)$) {};
}
\draw[edge] (v_2) -- (u2_0) -- (v2_0) -- (u2_1) -- (v2_1) -- (u_2);
\foreach \i in {0,...,1} {
  \node (u3_\i) at ($(0, 3) + sqrt(3)*(3, 0) + (30+2*\k*\i:2.0) $) {};
 \node[fill=black] (v3_\i) at ($(0, 3) + sqrt(3)*(3, 0) + (-30+2*\k*\i:2.0)$) {};
}
\draw[edge] (u2_1) -- (v3_0) -- (u3_0) -- (v3_1) -- (u3_1) -- (v2_1);
\end{tikzpicture}
}\\
\subfigure[$A_b(B)$]{
\label{fig:example4sub2}
\usetikzlibrary{calc}
\usetikzlibrary{decorations.pathmorphing}
\begin{tikzpicture}[scale=0.5]
\tikzstyle{every node} = [inner sep=2, draw, circle, fill=white]
\tikzstyle{edge} = [draw, line width=1.0]
\tikzstyle{spoke} = [draw, line width=1.0, decorate, decoration={snake, segment length=2mm, amplitude=1.2pt}, color=blue]
\pgfmathsetmacro{\k}{360 / 6};
\foreach \i in {0,...,2} {
  \node (u_\i) at (150+2*\k*\i:2.0) {};
  \node[fill=black] (v_\i) at (90+2*\k*\i:2.0) {};
}
\foreach \i in {0,...,2} {
  \pgfmathtruncatemacro{\j}{mod(\i+1, 3)};
  \draw[edge] (v_\i) -- (u_\i) -- (v_\j);
}
\foreach \i in {0,...,1} {
  \node (u2_\i) at ($sqrt(3)*(2, 0) + (-90+2*\k*\i:2.0) $) {};
  \node[fill=black] (v2_\i) at ($sqrt(3)*(2, 0) + (-30+2*\k*\i:2.0)$) {};
}
\draw[edge] (v_2) -- (u2_0) -- (v2_0) -- (u2_1) -- (v2_1) -- (u_2);
\foreach \i in {0,...,1} {
  \node (u3_\i) at ($(0, 3) + sqrt(3)*(3, 0) + (30+2*\k*\i:2.0) $) {};
 \node[fill=black] (v3_\i) at ($(0, 3) + sqrt(3)*(3, 0) + (-30+2*\k*\i:2.0)$) {};
}
\draw[edge] (u2_1) -- (v3_0) -- (u3_0) -- (v3_1) -- (u3_1) -- (v2_1);

\node (b_0) at ($(v3_0) + (-30:2)$) {}; \draw[spoke] (v3_0) -- (b_0);
\node (b_1) at ($(v3_1) + (90:2)$) {}; \draw[spoke] (v3_1) -- (b_1);
\node (b_2) at ($(v_0) + (90:2)$) {}; \draw[spoke] (v_0) -- (b_2);
\node (b_3) at ($(v_1) + (210:2)$) {}; \draw[spoke] (v_1) -- (b_3);
\node (b_4) at ($(v2_0) + (-30:2)$) {}; \draw[spoke] (v2_0) -- (b_4);

\node[fill=black] (x_0) at ($(u3_0) + (30:2)$) {};
\draw[edge] (b_1) -- (x_0) -- (b_0);
\node[fill=black] (x_1) at ($(v_2) + (-90:4)$) {};
\draw[edge] (b_3) -- (x_1) -- (b_4);
\node[fill=black] (x_2) at ($(u_0) + (150:2)$) {};
\draw[edge] (b_3) -- (x_2) -- (b_2);

\node[fill=black] (x_3) at ($0.5*(b_1) + 0.5*(b_2)$) {};
\draw[edge] (b_2) -- (x_3) -- (b_1);

\node[fill=black] (x_4) at ($(u2_1) + (-30:4)$) {};
\draw[edge] (b_0) -- (x_4) -- (b_4);
\end{tikzpicture}
}
\subfigure[$A_w(B)$]{
\label{fig:example4sub3}
\usetikzlibrary{calc}
\usetikzlibrary{decorations.pathmorphing}
\begin{tikzpicture}[scale=0.5]
\tikzstyle{every node} = [inner sep=2, draw, circle, fill=white]
\tikzstyle{edge} = [draw, line width=1.0]
\tikzstyle{spoke} = [draw, line width=1.0, decorate, decoration={snake, segment length=2mm, amplitude=1.2pt}, color=blue]
\pgfmathsetmacro{\k}{360 / 6};
\foreach \i in {0,...,2} {
  \node (u_\i) at (150+2*\k*\i:2.0) {};
  \node[fill=black] (v_\i) at (90+2*\k*\i:2.0) {};
}
\foreach \i in {0,...,2} {
  \pgfmathtruncatemacro{\j}{mod(\i+1, 3)};
  \draw[edge] (v_\i) -- (u_\i) -- (v_\j);
}
\foreach \i in {0,...,1} {
  \node (u2_\i) at ($sqrt(3)*(2, 0) + (-90+2*\k*\i:2.0) $) {};
  \node[fill=black] (v2_\i) at ($sqrt(3)*(2, 0) + (-30+2*\k*\i:2.0)$) {};
}
\draw[edge] (v_2) -- (u2_0) -- (v2_0) -- (u2_1) -- (v2_1) -- (u_2);
\foreach \i in {0,...,1} {
  \node (u3_\i) at ($(0, 3) + sqrt(3)*(3, 0) + (30+2*\k*\i:2.0) $) {};
 \node[fill=black] (v3_\i) at ($(0, 3) + sqrt(3)*(3, 0) + (-30+2*\k*\i:2.0)$) {};
}
\draw[edge] (u2_1) -- (v3_0) -- (u3_0) -- (v3_1) -- (u3_1) -- (v2_1);

\node[fill=black] (b_0) at ($(u3_0) + (30:2)$) {}; \draw[spoke] (u3_0) -- (b_0);
\node[fill=black] (b_1) at ($(u3_1) + (150:2)$) {}; \draw[spoke] (u3_1) -- (b_1);
\node[fill=black] (b_2) at ($(u_0) + (150:2)$) {}; \draw[spoke] (u_0) -- (b_2);
\node[fill=black] (b_3) at ($(u_1) + (-90:2)$) {}; \draw[spoke] (u_1) -- (b_3);
\node[fill=black] (b_4) at ($(u2_0) + (-90:2)$) {}; \draw[spoke] (u2_0) -- (b_4);

\node (x_0) at ($(v3_1) + (90:2)$) {};
\draw[edge] (b_1) -- (x_0) -- (b_0);
\node (x_1) at ($(v_2) + (-90:4)$) {};
\draw[edge] (b_3) -- (x_1) -- (b_4);
\node (x_2) at ($(v_1) + (210:2)$) {};
\draw[edge] (b_3) -- (x_2) -- (b_2);
\node (x_3) at ($0.5*(b_1) + 0.5*(b_2)$) {};
\draw[edge] (b_2) -- (x_3) -- (b_1);
\node (x_4) at ($(u2_1) + (-30:4)$) {};
\draw[edge] (b_0) -- (x_4) -- (b_4);
\end{tikzpicture}
}
\caption{Black and white altans are isomorphic, $A_b(B) \cong A_w(B)$, but come with different orientation.}
\label{fig:example4}
\end{figure}

\begin{figure}[!htbp]
\centering
\subfigure[$B$]{
\label{fig:example4sub1}
\usetikzlibrary{calc}
\begin{tikzpicture}[scale=0.5]
\tikzstyle{every node} = [inner sep=2, draw, circle, fill=white]
\tikzstyle{edge} = [draw, line width=1.0]
\pgfmathsetmacro{\k}{360 / 6};
\foreach \i in {0,...,2} {
  \node (u_\i) at (150+2*\k*\i:2.0) {};
  \node[fill=black] (v_\i) at (90+2*\k*\i:2.0) {};
}
\foreach \i in {0,...,2} {
  \pgfmathtruncatemacro{\j}{mod(\i+1, 3)};
  \draw[edge] (v_\i) -- (u_\i) -- (v_\j);
}
\foreach \i in {0,...,1} {
  \node (u2_\i) at ($sqrt(3)*(2, 0) + (-90+2*\k*\i:2.0) $) {};
  \node[fill=black] (v2_\i) at ($sqrt(3)*(2, 0) + (-30+2*\k*\i:2.0)$) {};
}
\draw[edge] (v_2) -- (u2_0) -- (v2_0) -- (u2_1) -- (v2_1) -- (u_2);
\foreach \i in {0,...,1} {
  \node (u3_\i) at ($(0, 3) + sqrt(3)*(1, 0) + (30+2*\k*\i:2.0) $) {};
}
 \node[fill=black] (v3_0) at ($(0, 3) + sqrt(3)*(1, 0) + (90:2.0)$) {};
\draw[edge] (v2_1) -- (u3_0) -- (v3_0) -- (u3_1) -- (v_0);
\end{tikzpicture} 
}\\
\subfigure[$A_b(B)$]{
\label{fig:example4sub2}
\usetikzlibrary{calc}
\usetikzlibrary{decorations.pathmorphing}
\begin{tikzpicture}[scale=0.5]
\tikzstyle{every node} = [inner sep=2, draw, circle, fill=white]
\tikzstyle{edge} = [draw, line width=1.0]
\tikzstyle{spoke} = [draw, line width=1.0, decorate, decoration={snake, segment length=2mm, amplitude=1.2pt}, color=blue]
\pgfmathsetmacro{\k}{360 / 6};
\foreach \i in {0,...,2} {
  \node (u_\i) at (150+2*\k*\i:2.0) {};
  \node[fill=black] (v_\i) at (90+2*\k*\i:2.0) {};
}
\foreach \i in {0,...,2} {
  \pgfmathtruncatemacro{\j}{mod(\i+1, 3)};
  \draw[edge] (v_\i) -- (u_\i) -- (v_\j);
}
\foreach \i in {0,...,1} {
  \node (u2_\i) at ($sqrt(3)*(2, 0) + (-90+2*\k*\i:2.0) $) {};
  \node[fill=black] (v2_\i) at ($sqrt(3)*(2, 0) + (-30+2*\k*\i:2.0)$) {};
}
\draw[edge] (v_2) -- (u2_0) -- (v2_0) -- (u2_1) -- (v2_1) -- (u_2);
\foreach \i in {0,...,1} {
  \node (u3_\i) at ($(0, 3) + sqrt(3)*(1, 0) + (30+2*\k*\i:2.0) $) {};
}
 \node[fill=black] (v3_0) at ($(0, 3) + sqrt(3)*(1, 0) + (90:2.0)$) {};
\draw[edge] (v2_1) -- (u3_0) -- (v3_0) -- (u3_1) -- (v_0);
\node (b_0) at ($(v3_0) + (0, 2)$) {};
\node (b_1) at ($(v2_0) + (-30:2)$) {};
\node (b_2) at ($(v_1) + (180+30:2)$) {};
\node[fill=black] (a_0) at ($(v_0) + (150:4)$) {};
\node[fill=black] (a_1) at ($(v_2) + (-90:4)$) {};
\node[fill=black] (a_2) at ($(v2_1) + (30:4)$) {};
\draw[edge] (b_0) -- (a_0) -- (b_2) -- (a_1) -- (b_1) -- (a_2) -- (b_0);
\draw[spoke] (b_0) -- (v3_0);
\draw[spoke] (b_1) -- (v2_0);
\draw[spoke] (b_2) -- (v_1);
\end{tikzpicture} 
}
\subfigure[$A_w(B)$]{
\label{fig:example4sub3}
\usetikzlibrary{calc}
\usetikzlibrary{decorations.pathmorphing}
\begin{tikzpicture}[scale=0.5]
\tikzstyle{every node} = [inner sep=2, draw, circle, fill=white]
\tikzstyle{edge} = [draw, line width=1.0]
\tikzstyle{spoke} = [draw, line width=1.0, decorate, decoration={snake, segment length=2mm, amplitude=1.2pt}, color=blue]
\pgfmathsetmacro{\k}{360 / 6};
\foreach \i in {0,...,2} {
  \node (u_\i) at (150+2*\k*\i:2.0) {};
  \node[fill=black] (v_\i) at (90+2*\k*\i:2.0) {};
}
\foreach \i in {0,...,2} {
  \pgfmathtruncatemacro{\j}{mod(\i+1, 3)};
  \draw[edge] (v_\i) -- (u_\i) -- (v_\j);
}
\foreach \i in {0,...,1} {
  \node (u2_\i) at ($sqrt(3)*(2, 0) + (-90+2*\k*\i:2.0) $) {};
  \node[fill=black] (v2_\i) at ($sqrt(3)*(2, 0) + (-30+2*\k*\i:2.0)$) {};
}
\draw[edge] (v_2) -- (u2_0) -- (v2_0) -- (u2_1) -- (v2_1) -- (u_2);
\foreach \i in {0,...,1} {
  \node (u3_\i) at ($(0, 3) + sqrt(3)*(1, 0) + (30+2*\k*\i:2.0) $) {};
}
 \node[fill=black] (v3_0) at ($(0, 3) + sqrt(3)*(1, 0) + (90:2.0)$) {};
\draw[edge] (v2_1) -- (u3_0) -- (v3_0) -- (u3_1) -- (v_0);
\node[fill=black] (b_0) at ($(u3_0) + (50:2)$) {}; \draw[spoke] (b_0) -- (u3_0);
\node[fill=black] (b_1) at ($(u3_1) + (130:2)$) {}; \draw[spoke] (b_1) -- (u3_1);
\node[fill=black] (b_2) at ($(u_1) + (-110:2)$) {}; \draw[spoke] (b_2) -- (u_1);
\node[fill=black] (b_3) at ($(u2_1) + (10:2)$) {}; \draw[spoke] (b_3) -- (u2_1);
\node[fill=black] (b_4) at ($(u2_0) + (-70:2)$) {}; \draw[spoke] (b_4) -- (u2_0);
\node[fill=black] (b_5) at ($(u_0) + (170:2)$) {}; \draw[spoke] (b_5) -- (u_0);
\node (a_0) at ($(v3_0) + (90:2.5)$) {};
\node (a_1) at ($(v_2) + (-90:4.5)$) {};
\node (a_2) at ($(v_1) + (210:2.5)$) {};
\node (a_3) at ($(v_0) + (150:4.5)$) {};
\node (a_4) at ($(v2_0) + (-30:2.5)$) {};
\node (a_5) at ($(v2_1) + (30:4.5)$) {};
\draw[edge] (b_0) -- (a_0) -- (b_1) -- (a_3) -- (b_5) -- (a_2) -- (b_2) -- (a_1) -- (b_4) -- (a_4) -- (b_3) -- (a_5) -- (b_0);
\end{tikzpicture} 
}
\caption{Black and white non-isomorphic altans $A_b(B) \ncong A_w(B)$.}
\label{fig:example4a}
\end{figure}
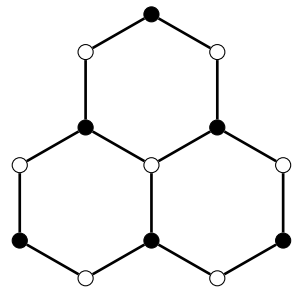
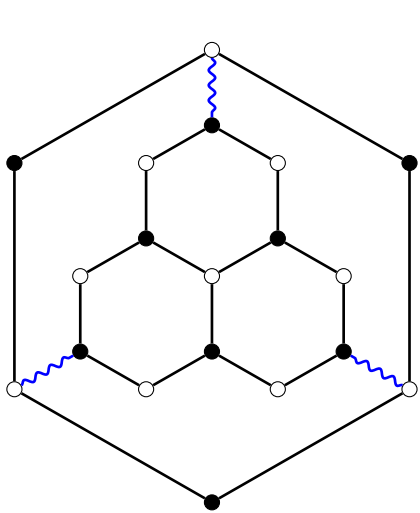
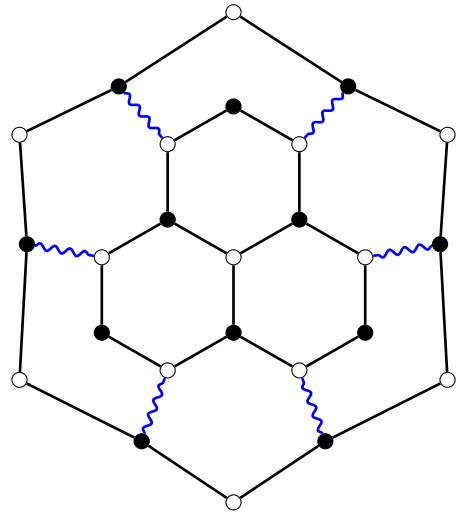

\end{example}

\begin{proposition}
If $G$ is bipartite,
then both $A(G, S_b)$ and $A(G, S_w)$ satisfy conditions of Theorem \ref{thm1}.
\end{proposition}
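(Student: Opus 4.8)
The plan is to verify the two conditions of Theorem~\ref{thm1} directly, separately for the two peripheral roots $S_b$ and $S_w$. Since those conditions are an ``if and only if'' characterisation of bipartiteness, confirming them is exactly what ``satisfy the conditions of Theorem~\ref{thm1}'' asks for.

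First I would dispatch condition~a). The altan operation replaces only the \emph{root} of the pair; the underlying graph remains $G$ in both $A(G, S_b)$ and $A(G, S_w)$. Since $G$ is bipartite by hypothesis, condition~a) holds for each choice of sub-root without any further argument.

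Next I would handle condition~b), which is the substantive point but also an immediate consequence of how $S_b$ and $S_w$ were defined. Fix a proper 2-colouring of $G$ into black and white classes; by the preceding remark in the text this colouring is unique up to swapping colours when $G$ is connected. By definition $S_b$ is precisely the set of black-coloured vertices of $S$, so every member of $S_b$ lies in the black bipartition class, and condition~b) holds for $A(G, S_b)$. Symmetrically, $S_w$ consists exactly of the white-coloured vertices of $S$, so all its members lie in the white class, giving condition~b) for $A(G, S_w)$. Combining a) and b) in each case yields the proposition.

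I do not expect a genuine obstacle here: the statement is essentially a restatement of the definitions of the black and white sub-roots together with the bipartiteness hypothesis. The only bookkeeping worth a sentence is that the altan operation requires a \emph{cyclically ordered} root, so one should note that $S_b$ and $S_w$ inherit well-defined cyclic orders from $S$ by retaining the relative cyclic positions of their elements; this is automatic and needs no computation.
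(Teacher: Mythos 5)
Your proposal is correct and follows essentially the same route as the paper: the paper's one-line proof likewise observes that $S_b$ and $S_w$ are each monochromatic by definition, so conditions a) and b) of Theorem~\ref{thm1} hold immediately. Your extra remark about the inherited cyclic order is a harmless bookkeeping detail the paper leaves implicit.
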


\begin{proof}
Since all vertices of $S_b$ are coloured black and all vertices of $S_w$ are coloured white, Theorem~\ref{thm1} applies to both black and white altans and the conclusion follows in each case.
\end{proof}

In \cite{gutman2014a,gutman2014} it was shown that $A(G, S)$ has twice the number of Kekul\'{e} structures as $G$. We note that this number is independent of the order in which we choose the vertices of $S$. It is even independent of the choice of $S$ itself. It is not hard to see that $A^n(G)$ at some point becomes similar to a hexagonal nanotube with the original graph $G$ as part of the cap. The structure of the periphery of $A^n(G, S)$ is composed of a ring of $k$ hexagons where $k = |S|$. Here we give the answers to the question of which benzenoids $B$ or more general fullerene patches will give rise to a capped nanotube $A^n(B)$.

There is a special class of benzenoids, called convex benzenoids \cite{cruzgutmanrada2012}. It can be defined in several equivalent ways. Each benzenoid $B$ is uniquely determined by its boundary-edges code $b(B)$ \cite{hansen1996}, see also \cite {kovic2014}, which in turn gives the distances between any two boundary vertices of valence 3. By definition the benzene is assigned the code $6$.

\begin{definition} A benzenoid $B$ is convex if and only if its boundary-edges code $b(B)$ does not contain symbol \tt{1}. \end{definition} 

\begin{example}
The distinction between convex and non-convex benzenoids is visible from their boundary-edges codes. For instance, the boundary-edges code 24334 of the benzenoid on Figure  \ref{fig:example5sub1} is 24334 and does not contain 1. Therefore the
benzenoid is convex while the boundary-edges code for the benzenoid in Figure~\ref{fig:example5sub2} is 144144, containing a 1. Hence the
benzenoid itself is non-convex. 
See Figure \ref{fig:example5}.

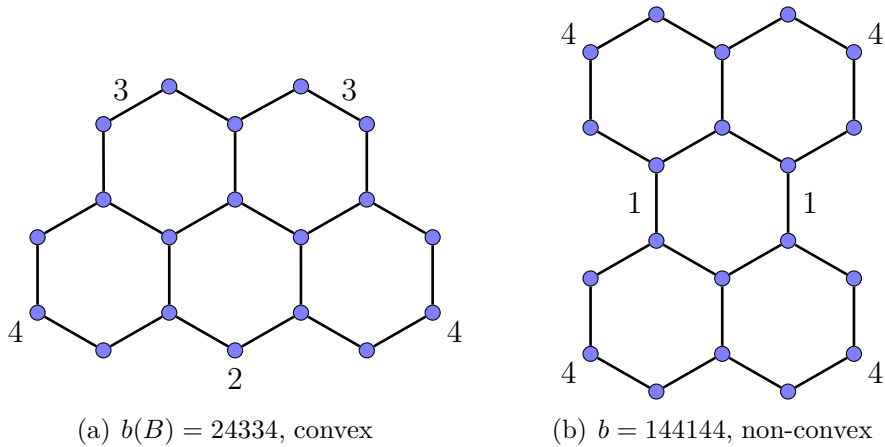
\begin{figure}[!htbp]
\centering
\subfigure[$b(B) = 24334$, convex]{
\label{fig:example5sub1}
\usetikzlibrary{calc}
\usetikzlibrary{decorations.pathmorphing}
\usetikzlibrary{arrows}
\begin{tikzpicture}[scale=0.5,>=angle 60]
\tikzstyle{every node} = [inner sep=2, draw, circle, fill=blue!50]
\tikzstyle{edge} = [draw, line width=1.0]
\pgfmathsetmacro{\k}{360 / 6};

\foreach \i in {0,...,4} {
  \node (u_\i) at (-90+\k*\i:2.0) {};
}
  \node[label=210:4] (u_5) at (-90+\k*5:2.0) {};
\draw[edge] (u_0) -- (u_1) -- (u_2) -- (u_3) -- (u_4) -- (u_5) -- (u_0);

\foreach \i in {1,...,3} {
  \node (d_\i) at ($sqrt(3)*(2,0) + (-90+\k*\i:2.0)$) {};
}
  \node[label=-90:$2$] (d_0) at ($sqrt(3)*(2,0) + (-90+\k*0:2.0)$) {};

\draw[edge] (u_1) -- (d_0) -- (d_1) -- (d_2) -- (d_3) -- (u_2);

\foreach \i in {2,...,3} {
  \node (e_\i) at ($sqrt(3)*(4,0) + (-90+\k*\i:2.0)$) {};
}
 \node (e_0) at ($sqrt(3)*(4,0) + (-90+\k*0:2.0)$) {};
 \node[label=-30:$4$] (e_1) at ($sqrt(3)*(4,0) + (-90+\k*1:2.0)$) {};

\draw[edge] (d_1) -- (e_0) -- (e_1) -- (e_2) -- (e_3) -- (d_2);

\foreach \i in {0,...,2} {
  \node (a_\i) at ($sqrt(3)*(3,0) + (0, 3) + (30+\k*\i:2.0)$) {};
}
\draw[edge] (e_3) -- (a_0) -- node[above right,fill=none,draw=none] {$3$} ++ (a_1) -- (a_2) -- (d_3);

\foreach \i in {0,...,1} {
  \node (b_\i) at ($sqrt(3)*(1,0) + (0, 3) + (90+\k*\i:2.0)$) {};
}
\draw[edge] (a_2) -- (b_0) -- node[above left,fill=none,draw=none] {$3$} ++ (b_1) -- (u_3);

\end{tikzpicture}
}
\subfigure[$b = 144144$, non-convex]{
\label{fig:example5sub2}
\usetikzlibrary{calc}
\usetikzlibrary{decorations.pathmorphing}
\usetikzlibrary{arrows}
\begin{tikzpicture}[scale=0.5,>=angle 60]
\tikzstyle{every node} = [inner sep=2, draw, circle, fill=blue!50]
\tikzstyle{edge} = [draw, line width=1.0]
\pgfmathsetmacro{\k}{360 / 6};

\foreach \i in {0,...,4} {
  \node (u_\i) at (-90+\k*\i:2.0) {};
}
\node[label=210:4] (u_5) at (-90+\k*5:2.0) {};
\draw[edge] (u_0) -- (u_1) -- (u_2) -- (u_3) -- (u_4) -- (u_5) -- (u_0);

\foreach \i in {2,...,3} {
  \node (d_\i) at ($sqrt(3)*(2,0) + (-90+\k*\i:2.0)$) {};
}
 \node (d_0) at ($sqrt(3)*(2,0) + (-90+\k*0:2.0)$) {};
\node[label=-30:$4$] (d_1) at ($sqrt(3)*(2,0) + (-90+\k*1:2.0)$) {};
\draw[edge] (u_1) -- (d_0) -- (d_1) -- (d_2) -- (d_3) -- (u_2);

\foreach \i in {0,...,3} {
  \node (u2_\i) at ($(0,6) + (-90+\k*\i:2.0)$) {};
}
\node[label=150:4] (u2_4) at ($(0, 6) + (-90+\k*4:2.0)$) {};
\node (u2_5) at ($(0, 6) + (-90+\k*5:2.0)$) {};
\draw[edge] (u2_0) -- (u2_1) -- (u2_2) -- (u2_3) -- (u2_4) -- (u2_5) -- (u2_0);

\foreach \i in {0,1,3} {
  \node (d2_\i) at ($sqrt(3)*(2,0) + (0, 6) + (-90+\k*\i:2.0)$) {};
}
\node[label=30:$4$] (d2_2) at ($sqrt(3)*(2,0) +(0, 6) + (-90+\k*2:2.0)$) {};
\draw[edge] (u2_1) -- (d2_0) -- (d2_1) -- (d2_2) -- (d2_3) -- (u2_2);

\draw[edge] (u_3) -- node[left,fill=none,draw=none] {$1$} ++ (u2_0);
\draw[edge] (d_3) -- node[right,fill=none,draw=none] {$1$} ++ (d2_0);

\end{tikzpicture}
}
\caption{The distinction between convex and non-convex benzenoids is visible from their boundary-edges code.}
\label{fig:example5}
\end{figure}
\end{example}

\section{Altans of fullerene- and other patches}

Now we generalize benzenoids to more general planar systems that we call patches. Our definition is more general than the definition of Jack Graver and his co-authors \cite{brinkmann2009,graver2003,gravergravesgraves2014,gravergraves2010,gravesgraves2014,graves2012}. Each patch has a unique boundary-edges code, however unlike benzenoids that are uniquely determined by their boundary-edges codes, the same boundary-edges code may belong to more than one patch or even to a fullerene patch (as defined below).

\begin{definition}
A \emph{patch} $\Pi$ is either a cycle or a 3-valent 2-connected plane (multi)graph with a distinguished outer face whose edges may be arbitrarily subdivided.
\end{definition}

In the following example we depict some benzenoids as patches arising from plane cubic graphs.
\begin{example}
Some plane cubic 2-connected graphs may be expanded to benzenoids; see Figure \ref{fig:example6}.

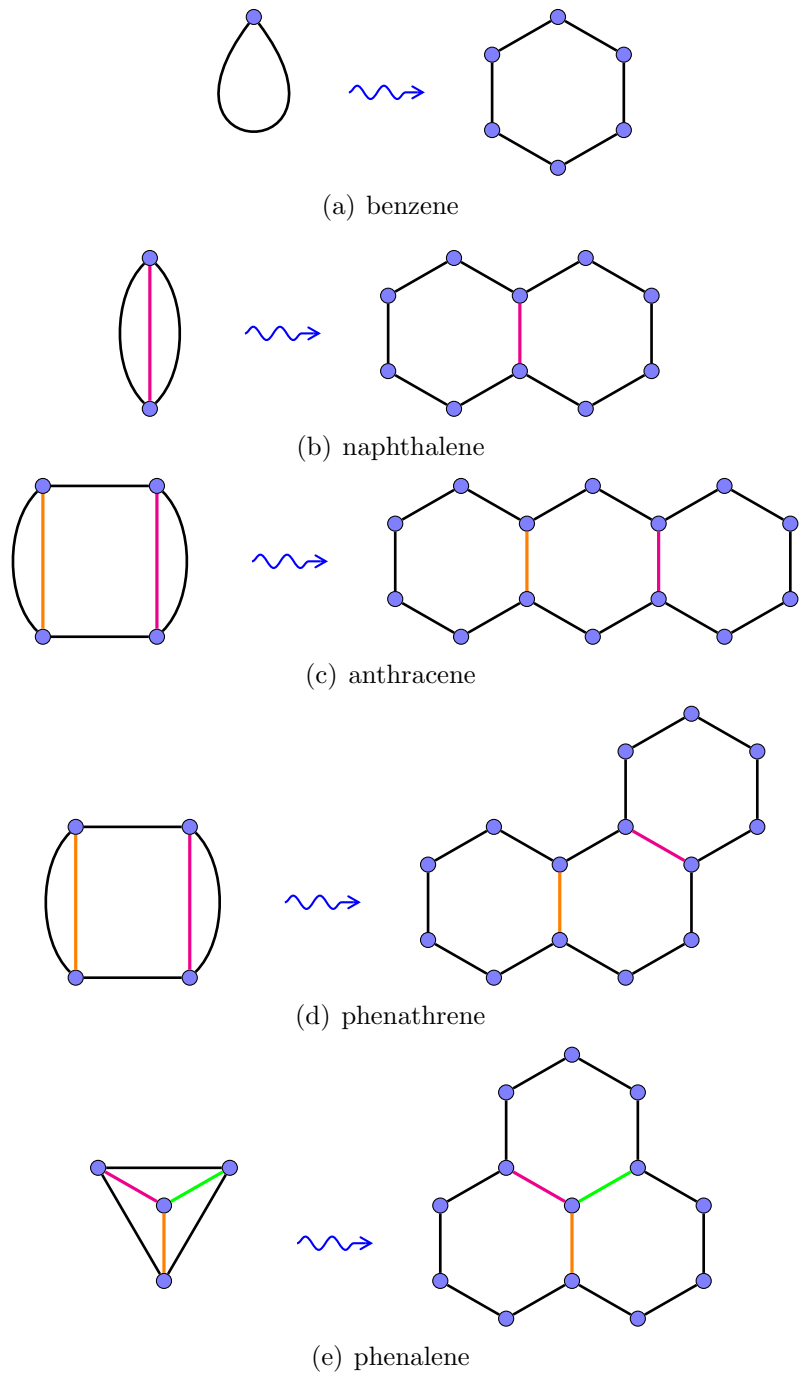
\begin{figure}[!htbp]
\centering
\subfigure[benzene]{
\label{fig:example6sub1}
\begin{tikzpicture}[scale=0.5,>=angle 60]
\tikzstyle{every node} = [inner sep=2, draw, circle, fill=blue!50]
\tikzstyle{edge} = [draw, line width=1.0]
\pgfmathsetmacro{\k}{360 / 6};
\node (a) at (-8, 2) {};
\draw[edge] (a) .. controls ($(a) + (-3, -4)$) and ($(a) + (3, -4)$) .. (a);
\path [thick,draw=blue,style={decorate, decoration=snake}] (-5.5,0) -- (-3.7,0);
\path [thick,draw=blue,->] (-3.7,0) -- (-3.5,0);
\foreach \i in {0,...,5} {
  \node (u_\i) at (-90+\k*\i:2.0) {};
}
\foreach \i in {0,...,5} {
  \pgfmathtruncatemacro{\j}{mod(\i+1, 6)};
  \draw[edge] (u_\i) -- (u_\j);
}
\end{tikzpicture}
}\\
\subfigure[naphthalene]{
\label{fig:example6sub2}
\begin{tikzpicture}[scale=0.5,>=angle 60]
\tikzstyle{every node} = [inner sep=2, draw, circle, fill=blue!50]
\tikzstyle{edge} = [draw, line width=1.0]
\pgfmathsetmacro{\k}{360 / 6};
\node (a) at (-8, 2) {};
\node (b) at (-8, -2) {};

\draw[edge] (a) .. controls (-9,1) and (-9,-1) .. (b);
\draw[edge] (a) .. controls (-7,1) and (-7,-1) .. (b);
\draw[edge,color=magenta,very thick] (a) -- (b);

\path [thick,draw=blue,style={decorate, decoration=snake}] (-5.5,0) -- (-3.7,0);
\path [thick,draw=blue,->] (-3.7,0) -- (-3.5,0);
\foreach \i in {0,...,5} {
  \node (u_\i) at (-90+\k*\i:2.0) {};
}
\foreach \i in {0,...,3} {
  \node (d_\i) at ($sqrt(3)*(2,0) + (-90+\k*\i:2.0)$) {};
}
\draw[edge] (u_1) -- (d_0) -- (d_1) -- (d_2) -- (d_3) -- (u_2);
\draw[edge] (u_0) -- (u_1);
\draw[edge,color=magenta,very thick] (u_1) -- (u_2);
\draw[edge] (u_2) -- (u_3) -- (u_4) -- (u_5) -- (u_0);
\end{tikzpicture}
}
\subfigure[anthracene]{
\label{fig:example6sub3}
\usetikzlibrary{calc}
\usetikzlibrary{decorations.pathmorphing}
\usetikzlibrary{arrows}
\begin{tikzpicture}[scale=0.5,>=angle 60]
\tikzstyle{every node} = [inner sep=2, draw, circle, fill=blue!50]
\tikzstyle{edge} = [draw, line width=1.0]
\pgfmathsetmacro{\k}{360 / 6};
\node (a) at (-8, 2) {};
\node (b) at (-8, -2) {};
\node (a2) at (-11, 2) {};
\node (b2) at (-11, -2) {};

\draw[edge] (a2) .. controls (-12,1) and (-12,-1) .. (b2);
\draw[edge] (a) .. controls (-7,1) and (-7,-1) .. (b);
\draw[edge,color=magenta,very thick] (a) -- (b);
\draw[edge,color=orange,very thick] (a2) -- (b2);
\draw[edge] (a) -- (a2);
\draw[edge] (b) -- (b2);

\path [thick,draw=blue,style={decorate, decoration=snake}] (-5.5,0) -- (-3.7,0);
\path [thick,draw=blue,->] (-3.7,0) -- (-3.5,0);
\foreach \i in {0,...,5} {
  \node (u_\i) at (-90+\k*\i:2.0) {};
}
\foreach \i in {0,...,3} {
  \node (d_\i) at ($sqrt(3)*(2,0) + (-90+\k*\i:2.0)$) {};
}
\draw[edge] (u_1) -- (d_0) -- (d_1); 
\draw[edge,color=magenta,very thick] (d_1) -- (d_2);
\draw[edge] (d_2) -- (d_3) -- (u_2);
\draw[edge] (u_0) -- (u_1);
\draw[edge,color=orange,very thick] (u_1) -- (u_2);
\draw[edge] (u_2) -- (u_3) -- (u_4) -- (u_5) -- (u_0);
\foreach \i in {0,...,3} {
  \node (e_\i) at ($sqrt(3)*(4,0) + (-90+\k*\i:2.0)$) {};
}
\draw[edge] (d_1) -- (e_0) -- (e_1) -- (e_2) -- (e_3) -- (d_2);
\end{tikzpicture}
}
\subfigure[phenathrene]{
\label{fig:example6sub4}
\usetikzlibrary{calc}
\usetikzlibrary{decorations.pathmorphing}
\usetikzlibrary{arrows}
\begin{tikzpicture}[scale=0.5,>=angle 60]
\tikzstyle{every node} = [inner sep=2, draw, circle, fill=blue!50]
\tikzstyle{edge} = [draw, line width=1.0]
\pgfmathsetmacro{\k}{360 / 6};
\node (a) at (-8, 2) {};
\node (b) at (-8, -2) {};
\node (a2) at (-11, 2) {};
\node (b2) at (-11, -2) {};

\draw[edge] (a2) .. controls (-12,1) and (-12,-1) .. (b2);
\draw[edge] (a) .. controls (-7,1) and (-7,-1) .. (b);
\draw[edge,color=magenta,very thick] (a) -- (b);
\draw[edge,color=orange,very thick] (a2) -- (b2);
\draw[edge] (a) -- (a2);
\draw[edge] (b) -- (b2);

\path [thick,draw=blue,style={decorate, decoration=snake}] (-5.5,0) -- (-3.7,0);
\path [thick,draw=blue,->] (-3.7,0) -- (-3.5,0);
\foreach \i in {0,...,5} {
  \node (u_\i) at (-90+\k*\i:2.0) {};
}
\foreach \i in {0,...,3} {
  \node (d_\i) at ($sqrt(3)*(2,0) + (-90+\k*\i:2.0)$) {};
}
\draw[edge] (u_1) -- (d_0) -- (d_1) -- (d_2); 
\draw[edge,color=magenta,very thick] (d_2) -- (d_3);
\draw[edge] (d_3) -- (u_2);
\draw[edge] (u_0) -- (u_1);
\draw[edge,color=orange,very thick] (u_1) -- (u_2);
\draw[edge] (u_2) -- (u_3) -- (u_4) -- (u_5) -- (u_0);
\foreach \i in {0,...,3} {
  \node (e_\i) at ($sqrt(3)*(3,0) + (0, 3) + (-30+\k*\i:2.0)$) {};
}
\draw[edge] (d_2) -- (e_0) -- (e_1) -- (e_2) -- (e_3) -- (d_3);
\end{tikzpicture}
}
\subfigure[phenalene]{
\label{fig:example6sub5}
\usetikzlibrary{calc}
\usetikzlibrary{decorations.pathmorphing}
\usetikzlibrary{arrows}
\begin{tikzpicture}[scale=0.5,>=angle 60]
\tikzstyle{every node} = [inner sep=2, draw, circle, fill=blue!50]
\tikzstyle{edge} = [draw, line width=1.0]
\pgfmathsetmacro{\k}{360 / 6};

\node (o) at (-9, 1) {};
\node (a1) at ($ (-9, 1) + (0, -2) $) {};
\node (a2) at ($ (-9, 1) + (30:2) $) {};
\node (a3) at ($ (-9, 1) + (150:2) $) {};

\draw[edge,color=orange,very thick] (o) -- (a1);
\draw[edge,color=green,very thick] (o) -- (a2);
\draw[edge,color=magenta,very thick] (o) -- (a3);
\draw[edge] (a1) -- (a2) -- (a3) -- (a1);

\path [thick,draw=blue,style={decorate, decoration=snake}] (-5.5,0) -- (-3.7,0);
\path [thick,draw=blue,->] (-3.7,0) -- (-3.5,0);
\foreach \i in {0,...,5} {
  \node (u_\i) at (-90+\k*\i:2.0) {};
}
\foreach \i in {0,...,3} {
  \node (d_\i) at ($sqrt(3)*(2,0) + (-90+\k*\i:2.0)$) {};
}
\draw[edge] (u_1) -- (d_0) -- (d_1) -- (d_2) -- (d_3); 
\draw[edge,color=green,very thick] (d_3) -- (u_2);
\draw[edge] (u_0) -- (u_1);
\draw[edge,color=orange,very thick] (u_1) -- (u_2);
\draw[edge,color=magenta,very thick] (u_2) -- (u_3);
\draw[edge] (u_3) -- (u_4) -- (u_5) -- (u_0);
\foreach \i in {1,...,3} {
  \node (e_\i) at ($sqrt(3)*(1,0) + (0, 3) + (-30+\k*\i:2.0)$) {};
}
\draw[edge] (d_3) -- (e_1) -- (e_2) -- (e_3) -- (u_3);
\end{tikzpicture}
}
\caption{Smallest benzenoids as patches. Benzene is obtained from a loop and the rest are obtained from plane cubic 2-connected graphs.}
\label{fig:example6}
\end{figure}

\end{example}

\begin{example}
The graph in Figure \ref{fig:example7} does not give rise to any patch.

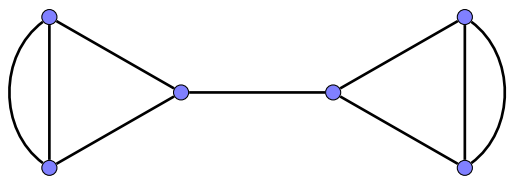
\begin{figure}[!htbp]
\centering
\begin{tikzpicture}[scale=0.5,>=angle 60]
\tikzstyle{every node} = [inner sep=2, draw, circle, fill=blue!50]
\tikzstyle{edge} = [draw, line width=1.0]
\pgfmathsetmacro{\k}{360 / 6};

\foreach \i in {0,...,2} {
  \node (a_\i) at (120*\i:{4/sqrt(3)} ) {};
}
\draw[edge] (a_0) -- (a_1) -- (a_2) -- (a_0);
\draw[edge] (a_1) .. controls (-2.5,1) and (-2.5,-1) .. (a_2);

\foreach \i in {0,...,2} {
  \node (b_\i) at ($({8/sqrt(3)+4},0) + (60+120*\i:{4/sqrt(3)} )$) {};
}
\draw[edge] (b_0) .. controls ($({8/sqrt(3)+4},0) + (2.5,1)$) and ($({8/sqrt(3)+4},0) + (2.5,-1)$) .. (b_2);
\draw[edge] (b_0) -- (b_1) -- (b_2) -- (b_0);
\draw[edge] (a_0) -- (b_1);
\end{tikzpicture}
\caption{A plane cubic graph that is not 2-connected does not have a boundary cycle and is therefore not able to produce any patch.}
\label{fig:example7}
\end{figure}

\end{example}

Clearly a patch is a proper generalization of a benzenoid.

\begin{definition}
A patch $\Phi$ with interior faces pentagons and hexagons is called a \emph{fullerene patch}.
\end{definition}

The above definition coincides with the definition of Graver et al.\ \cite{gravergravesgraves2014,gravergraves2010}.
\begin{definition}
A patch $\Phi$ with interior faces hexagons is called a \emph{helicene patch}.
\end{definition}

\begin{proposition}
The class of helicene patches and helicenes coincide.
\end{proposition}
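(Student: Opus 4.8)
The plan is to prove that the two classes are equal by double inclusion, after fixing the working definition of a helicene as a finite $2$-connected plane (multi)graph all of whose vertices have degree $2$ or $3$ and all of whose bounded faces are hexagons (a single benzene ring being the smallest instance, and pericondensed and helically overlapping systems being allowed). Under this reading the statement amounts to checking that the combinatorial notion of a helicene patch recovers exactly the family of helicenes; the only genuine difference between the two is that a helicene may be forced to spiral and overlap when drawn in the plane, whereas the patch formalism records only the abstract plane graph and so ignores the coiling.

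For the inclusion \emph{helicene} $\subseteq$ \emph{helicene patch} I would argue as follows. Let $H$ be a helicene. If $H$ is a single hexagon then $H = C_6$ is a cycle, hence a patch by the first clause of the definition, and its unique bounded face is a hexagon. Otherwise $H$ has at least one vertex of degree $3$; I suppress all degree-$2$ vertices of $H$ to obtain $H_0$. The facts to verify are that $H_0$ is $3$-valent (suppression leaves the degree of every branch vertex unchanged), that it is again $2$-connected and plane (suppression of a degree-$2$ vertex is the inverse of an edge subdivision and preserves both properties), and that $H$ is recovered from $H_0$ by subdividing edges. Taking the unbounded face of $H_0$ as the distinguished outer face then exhibits $H$ as a patch, and since every bounded face of $H$ is a hexagon, $H$ is a helicene patch. (For instance, naphthalene and anthracene suppress to the multigraphs shown in Figure~\ref{fig:example6}, which is exactly why the definition allows multigraphs.)

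For the reverse inclusion I would start from a helicene patch $\Phi$ and reconstruct the helicene. By definition the bounded faces of $\Phi$ are all hexagons, so the only face permitted to be non-hexagonal is the distinguished outer face. This single observation already rules out any internal hole: a coronoid-type hole would be an additional bounded non-hexagonal face, which is forbidden. Hence $\Phi$ is a simply connected arrangement of finitely many hexagons glued edge-to-edge, which is exactly a (possibly pericondensed, possibly helically overlapping) hexagonal system, i.e.\ a helicene. To make this rigorous I would induct on the number of bounded faces: the base case of one hexagon is benzene, and for the inductive step I would locate a hexagon meeting the outer boundary along a connected path of edges, peel it off, check that the result is a smaller helicene patch (still $2$-connected, still with all bounded faces hexagonal, still with a single outer face), apply the induction hypothesis, and finally re-attach the removed hexagon, which keeps us inside the helicene class.

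The main obstacle is this reverse inclusion, and within it the peeling step: I must guarantee that a boundary hexagon can always be chosen whose removal leaves a connected, $2$-connected patch rather than disconnecting the graph or creating a non-hexagonal interior face, and that the re-attachment is consistent with a plane (helically coiled) realisation. Establishing the existence of such a removable boundary hexagon is the analogue, for abstract hexagonal patches, of the boundary statement invoked in the proof of Theorem~\ref{thm2} for finite benzenoids; carrying it out carefully, together with verifying that $2$-connectivity is preserved under the peel, is where the real work lies.
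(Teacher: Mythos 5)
The paper's own proof of this proposition is the single sentence ``It follows directly from the definitions'': the authors take a helicene to \emph{be}, by definition, a simply connected system of hexagons glued edge to edge (with helical overlap permitted as a matter of geometric drawing, not of combinatorics), so for them there is nothing to verify. Your forward inclusion is sound and is essentially the content the paper illustrates in Figure~\ref{fig:example6}: suppressing the degree-$2$ vertices of a helicene yields a $2$-connected plane cubic (multi)graph from which the helicene is recovered by subdivision. But your reverse inclusion, as written, has a genuine gap, and you flag it yourself: the entire content of that direction is the peeling lemma --- the existence, in every helicene patch with more than one bounded face, of a boundary hexagon whose removal leaves a $2$-connected patch with all bounded faces hexagonal --- and you announce it without proving it. This is not a routine verification; it is the analogue of the nontrivial boundary result of Gutman that the paper has to cite in the proof of Theorem~\ref{thm2}, and for pericondensed or coiled systems the naive choice of boundary hexagon can indeed leave a cut vertex. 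A proof that defers its only hard step is not yet a proof.

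The good news is that the peeling induction is avoidable. Since a patch is $2$-connected and plane, its outer face is bounded by a cycle, and the union of the closed bounded faces of $\Phi$ is exactly the closed disc bounded by that cycle. If every bounded face is a hexagon, this exhibits $\Phi$ at once as a closed disc tiled by hexagons meeting edge to edge --- in particular simply connected, with no coronoid hole --- which is precisely the working definition of a helicene you fixed at the outset. No induction, no removable hexagon, and no re-attachment step is needed; the question of whether the hexagons can be drawn as congruent regular hexagons (forcing the helical overlap) concerns the geometric realisation and does not enter the combinatorial equivalence being asserted. With that substitution your argument closes, and it then differs from the paper's only in making explicit what the authors absorb into their definitions.
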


\begin{proof}
It follows directly from the definitions.
\end{proof}

In particular this means that the following is true:

\begin{proposition}
Each benzenoid is a  helicene patch but there are helicene patches that are not benzenoids.
\end{proposition}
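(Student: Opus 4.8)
The plan is to prove the two halves of the proposition separately: the inclusion (every benzenoid is a helicene patch) by a direct structural check, and the properness (some helicene patch is not a benzenoid) by exhibiting a helicene.

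For the inclusion I would argue as follows. If the benzenoid $B$ is a single hexagon (benzene), it is the cycle $C_6$, hence a patch by the first alternative in the definition of a patch, and its only bounded face is a hexagon, so $B$ is a helicene patch. If $B$ consists of at least two hexagons, then every vertex of $B$ has degree $2$ or $3$ and at least one has degree $3$. I would \emph{suppress} (smooth away) all vertices of degree $2$; smoothing leaves the degrees of the surviving vertices unchanged and preserves $2$-connectivity, so the resulting plane multigraph $B^{\ast}$ is $3$-valent and $2$-connected (multiple edges are allowed and do occur, e.g.\ for naphthalene). Since $B$ is recovered from $B^{\ast}$ by subdividing edges, $B$ is a patch; and because $B$ is a benzenoid all of its interior faces are hexagons. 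Thus $B$ is a patch whose interior faces are all hexagons, i.e.\ a helicene patch, which proves the first half.

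For the properness I would produce a helicene that is not a benzenoid. By the preceding proposition every helicene is a helicene patch, so it suffices to find a helicene that cannot be realized as a subgraph of the regular hexagonal lattice. I would take $[6]$helicene, the ortho-fused (catacondensed) chain of six hexagons $h_1, \ldots, h_6$ in which every internal hexagon is angularly annulated in the same rotational sense. As an abstract plane graph it is a $3$-valent $2$-connected plane graph with subdivided edges whose six bounded faces are hexagons, hence a helicene patch.

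The crux, and the main obstacle, is to show that $[6]$helicene is not a benzenoid, i.e.\ that it has no embedding into the hexagonal lattice. For this I would pass to the coarse chain of hexagon centres and track the step directions between consecutive centres: a linear annulation keeps the step direction fixed, whereas an angular annulation (always of the same sign in a helicene) rotates it by $60^{\circ}$. Writing the centres as partial sums of unit vectors at directions $0^{\circ}, 60^{\circ}, \ldots$, one computes that the centre of $h_6$ is forced into a cell \emph{adjacent} to that of $h_1$ (the five unit vectors $u(0^{\circ}), \ldots, u(240^{\circ})$ sum to $u(120^{\circ})$, a single lattice step). In the hexagonal lattice adjacent cells share an edge, so any lattice placement would fuse $h_1$ and $h_6$; but in $[6]$helicene these two rings are non-adjacent in the chain and share no vertex, a contradiction. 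Hence $[6]$helicene admits no hexagonal-lattice embedding and is not a benzenoid, while remaining a valid helicene patch; note that the shorter helicenes up to $[5]$ still embed (there the terminal rings land at lattice distance $\sqrt{3}$, not forced to be adjacent), so length six is exactly the first obstruction. This establishes that the inclusion is proper.
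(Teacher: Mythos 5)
Your proof is correct and follows the same route as the paper: the inclusion is read off directly from the definition of a patch (your smoothing of degree-$2$ vertices recovers the underlying cubic $2$-connected plane multigraph, exactly as in the paper's Figure of benzenoids-as-patches), and properness is witnessed by a helicene. The only difference is one of completeness: the paper merely points at a drawn helicene and asserts it is not a benzenoid, whereas you actually prove non-embeddability of $[6]$helicene via the hexagon-centre computation (five unit steps at $0^{\circ},60^{\circ},\dots,240^{\circ}$ summing to a single lattice step, forcing the terminal cells to share an edge), and your side remark that $[5]$helicene still embeds is also correct.
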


\begin{proof}
The implication follows directly from the definition. It is not an equivalence, since each proper helicene
such as the one from Figure \ref{fig:helicene} is a counter-example to the converse.

\begin{figure}[!htbp]
\centering
\usetikzlibrary{calc}
\usetikzlibrary{decorations.pathmorphing}
\usetikzlibrary{arrows}
\begin{tikzpicture}[scale=0.5,>=angle 60]
\tikzstyle{every node} = [inner sep=2, draw, circle, fill=blue!50]
\tikzstyle{edge} = [draw, line width=1.0]
\pgfmathsetmacro{\k}{360 / 6};
  \node (u_0) at (-90+\k*0:2.0) {};
  \node (u_1) at (-90+\k*1:2.0) {};
  \node (u_2) at ($(-90+\k*2:2.0) + (0, -0.25)$) {};
  \node (u_3) at ($(-90+\k*3:2.0) + (0, -0.25)$) {};
  \node (u_4) at (-90+\k*4:2.0) {};
  \node (u_5) at (-90+\k*5:2.0) {};
\draw[edge] (u_0) -- (u_1) -- (u_2) -- (u_3) -- (u_4) -- (u_5) -- (u_0);
\foreach \i in {1,...,3} {
  \node (d_\i) at ($sqrt(3)*(2,0) + (-90+\k*\i:2.0)$) {};
}
  \node (d_0) at ($sqrt(3)*(2,0) + (-90+\k*0:2.0)$) {};
\draw[edge] (u_1) -- (d_0) -- (d_1) -- (d_2) -- (d_3);
\foreach \i in {2,...,3} {
  \node (e_\i) at ($sqrt(3)*(4,0) + (-90+\k*\i:2.0)$) {};
}
 \node (e_0) at ($sqrt(3)*(4,0) + (-90+\k*0:2.0)$) {};
 \node (e_1) at ($sqrt(3)*(4,0) + (-90+\k*1:2.0)$) {};
\draw[edge] (d_1) -- (e_0) -- (e_1) -- (e_2) -- (e_3) -- (d_2);
\foreach \i in {0,...,2} {
  \node (a_\i) at ($sqrt(3)*(3,0) + (0, 3) + (30+\k*\i:2.0)$) {};
}
\draw[edge] (e_3) -- (a_0) -- (a_1) -- (a_2) -- (d_3);
\foreach \i in {0,...,1} {
  \node (b_\i) at ($sqrt(3)*(1,0) + (0, 3) + (90+\k*\i:2.0)$) {};
}
\node (g_0) at ($(u_0) + (-90:2)$) {}; \draw[edge] (g_0) -- (u_0);
\node (g_1) at ($(d_0) + (-90:2)$) {}; \draw[edge] (g_1) -- (d_0);
\node (g_2) at ($(e_0) + (-90:2)$) {}; \draw[edge] (g_2) -- (e_0);
\node (h_0) at ($(g_0) + (-30:2)$) {}; \draw[edge] (g_0) -- (h_0) -- (g_1);
\node (h_1) at ($(g_1) + (-30:2)$) {}; \draw[edge] (g_1) -- (h_1) -- (g_2);
 \node (uuu_2) at ($(u_2) + (0, 0.5)$) {};
\node (uuu_3) at ($(u_3) + (0, 0.5)$) {};
\draw[edge] (a_2) -- (b_0) -- (b_1) -- (uuu_3) -- (uuu_2);
\draw[edge] (uuu_2) -- (d_3);
\end{tikzpicture}
\caption{Not all patches with internal 6-cycles are benzenes. The patch in this figure is a helicene.}
\label{fig:helicene}
\end{figure}
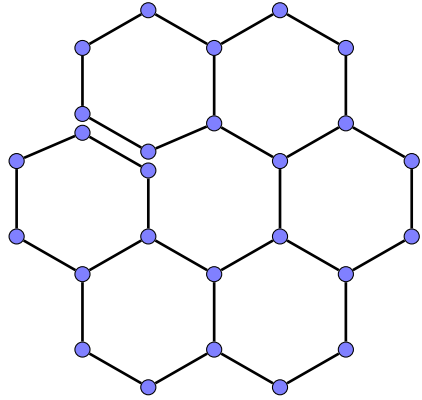

\end{proof}

For a patch $\Pi$, the altan of the patch, $A(\Pi)$, is clearly defined, i.e.\ its peripheral root contains cyclically ordered vertices of valence 2 on the perimeter.

\begin{proposition}
\label{prop:altanpatch}
If $\Pi$ is a patch with $k$ vertices of valence $2$, then $A(\Pi)$ is a patch with $k$ vertices of valence $2$ and
boundary-edges code $2^k$. Furthermore, the faces of $\Pi$ are augmented by a ring of $k$ faces to form the internal faces of $A(\Pi)$.
\end{proposition}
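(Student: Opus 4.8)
The plan is to read the local structure of $A(\Pi)=(G_1,S_1)$ straight off the altan construction and then check the four assertions one at a time. First I would record degrees. The construction adds the $2k$ vertices $S_0\cup S_1$ together with the peripheral cycle $C$ of length $2k$, in which the vertices of $S_0$ and $S_1$ alternate, and the $k$ spokes $(s_i,n+i)$. Consequently every vertex of $S_0$ acquires degree $3$ (two edges along $C$, one spoke), every vertex of $S_1$ has degree $2$ (two edges along $C$), each old root vertex $s_i$ has its degree raised from $2$ to $3$ by its spoke, and every other vertex of $\Pi$ keeps its degree. Since the $k$ valence-$2$ vertices of $\Pi$ were exactly the root $S$, it follows that the valence-$2$ vertices of $A(\Pi)$ are precisely the $k$ vertices of $S_1$, which establishes the count and identifies the new root.

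Next I would verify that $A(\Pi)$ is a patch. Suppressing the $k$ degree-$2$ vertices $S_1$ turns $C$ into a $k$-cycle $C'$ on $S_0$ and leaves every remaining vertex of degree $3$; call the resulting plane multigraph $H^{\ast}$. Then $H^{\ast}$ is cubic and $A(\Pi)$ is exactly the subdivision of $H^{\ast}$ obtained by subdividing each of the $k$ edges of $C'$ once, so by definition $A(\Pi)$ is a patch provided $H^{\ast}$ is $2$-connected. Since subdivision preserves $2$-connectivity, it suffices to show $A(\Pi)$ has no cut vertex. Here I would use that $\Pi$ is $2$-connected (either a cycle, or a subdivision of a $2$-connected cubic multigraph) and that $C$ is $2$-connected, while the two are joined by the $k\ge 2$ spokes, whose endpoints $s_0,\dots,s_{k-1}$ in $\Pi$ and $n,\dots,n+k-1$ on $C$ are pairwise distinct: deleting a single vertex destroys at most one spoke, so at least one spoke survives to link the connected remnant of $\Pi$ to the connected remnant of $C$. (Degenerate cases with very small $k$ are checked directly.)

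For the boundary-edges code I would note that the new outer face is the region outside $C$, so the outer boundary of $A(\Pi)$ is the cycle $C$ itself. Along $C$ the degree-$3$ vertices are exactly those of $S_0$, and the cyclic order $(n,n+k,n+1,\dots)$ shows that between two consecutive vertices of $S_0$ lies exactly one vertex of $S_1$, hence two boundary edges. With $k$ degree-$3$ vertices each separated from the next by two edges, the boundary-edges code is $\underbrace{2\cdots2}_{k}=2^{k}$.

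Finally, for the faces I would observe that no edge of $\Pi$ is removed and nothing is drawn inside $\Pi$, so every interior face of $\Pi$ persists as an interior face of $A(\Pi)$. The only region altered is the former outer face of $\Pi$, which becomes the annulus bounded inside by the old perimeter and outside by $C$; the $k$ spokes cut this annulus into exactly $k$ new interior faces, forming the ring, and the region outside $C$ is the new outer face. As a numerical check, Euler's formula gives $F_{A(\Pi)}=2-(V_\Pi+2k)+(E_\Pi+3k)=F_\Pi+k$, confirming that exactly $k$ interior faces are gained. I expect the $2$-connectivity step to be the main obstacle; the other three assertions are direct bookkeeping on $C$ and the spokes.
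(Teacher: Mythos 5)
Your proof is correct and follows the same route as the paper, which disposes of this proposition with the single line ``This follows directly from the definition of altans''; you have simply supplied the degree count, the $2$-connectivity check, the reading of the boundary code along $C$, and the Euler-formula face count that the authors leave implicit. No gaps worth flagging beyond the small-$k$ degenerate cases you already acknowledge.
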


\begin{proof}
This follows directly from the definition of altans.
\end{proof}

A $(k,1)$-nanotube is a ring of $k$ hexagons. We may view is as the black altan of a cycle on $2k$ vertices: $A_b(C_{2k})$. If we glue $s$ such rings one on top of the other we obtain a
$(k,s)$-nanotube.

\begin{corollary}
If $\Pi$ is a patch with boundary code $2^k$ then the ring of faces attached to $\Pi$ when forming $A(\Pi)$ is 
a $(k,1)$-nanotube.
\end{corollary}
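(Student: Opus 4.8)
The plan is to reduce the problem to the boundary of $\Pi$, since the ring of faces created by the altan operation depends only on the boundary walk and the peripheral root, not on the interior of the patch. First I would decode the hypothesis: a boundary-edges code $2^k$ records $k$ boundary vertices of valence~$3$ with exactly two edges between each consecutive pair, so along the perimeter the valence-$3$ vertices alternate with single valence-$2$ vertices. Thus the boundary cycle $\partial\Pi$ has length $2k$ and contains precisely $k$ vertices of valence~$2$; these constitute the peripheral root $S$, with $|S| = k$, cyclically ordered by traversing the perimeter.

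Next I would apply Proposition~\ref{prop:altanpatch}, which already guarantees that forming $A(\Pi)$ augments the faces of $\Pi$ by a ring of $k$ new faces lying between $\partial\Pi$ and the freshly attached peripheral cycle $C$ of length $2k$. To identify these faces I would trace the boundary of a single ring face: it is bounded by an arc of $\partial\Pi$ passing through one valence-$3$ vertex (two edges), a spoke joining a root vertex $s_i \in S$ to its partner $n+i \in S_0$ on $C$, an arc of $C$ passing through one vertex of $S_1$ (two edges, by the cyclic order $n, n+k, n+1, n+k+1, \dots$ prescribed in the altan definition), and a second spoke. These six edges close up into a hexagon, and there are exactly $k$ of them arranged cyclically.

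Finally I would make the comparison with $A_b(C_{2k})$ explicit. The attachment performed by the altan operation is determined entirely by the cyclically ordered root $S$, so the subgraph formed by $\partial\Pi$, the $k$ spokes, and the cycle $C$ is combinatorially identical to the one obtained when the altan is applied to a $2k$-cycle whose peripheral root is one bipartition class of $k$ alternating vertices, namely $A_b(C_{2k})$. Since $A_b(C_{2k})$ is by definition the $(k,1)$-nanotube, the ring of faces attached to $\Pi$ is a $(k,1)$-nanotube, as claimed.

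The only genuinely delicate point is the index bookkeeping in the middle step: one must verify that the two $C$-edges bounding each ring face pass through an $S_1$ vertex rather than an $S_0$ vertex, and that consecutive spokes land on consecutive $S_0$ vertices, so that each face closes as a hexagon rather than degenerating or overlapping a neighbour. This is forced by the explicit cyclic order imposed on $C$, but it is the one place where a careful chase of indices is needed.
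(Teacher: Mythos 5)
Your proof is correct: the paper actually gives no proof of this corollary at all, treating it as immediate from Proposition~\ref{prop:altanpatch} and the definition of a $(k,1)$-nanotube as $A_b(C_{2k})$, and your face-tracing argument (two boundary edges, a spoke, two edges of $C$ through an $S_1$ vertex, a spoke, giving $k$ hexagons) is exactly the verification the authors leave implicit. The index bookkeeping you flag as delicate does check out against the prescribed cyclic order of $C$, so nothing further is needed.
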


\begin{figure}
\centering
\usetikzlibrary{calc}
\usetikzlibrary{intersections}
\begin{tikzpicture}[scale=0.5]
\tikzstyle{every node} = [inner sep=2, draw, circle, fill=blue!50]
\tikzstyle{edge} = [draw, line width=1.0]
\pgfmathsetmacro{\k}{360 / 6};
\draw[thick, dashed, black] ($(-90+\k*1:2.0) + (-30:2) + (30:2) + (-30:2) + (30:2) + (90:2)$) arc (-30:210:6cm and 1.3cm);
\draw[name path=elipsa2,thick, dashed, black] ($(-90+\k*1:2.0) + (-30:2) + (30:2) + (-30:2) + (30:2)$) arc (-30:210:6cm and 1.3cm);
\fill[white] ($sqrt(3)*(-1,0) + (0,-2)$) rectangle ($sqrt(3)*(5,0) + (0, 2)$);
\foreach \i in {0,...,5} {
  \node (u_\i) at (-90+\k*\i:2.0) {};
}
\foreach \i in {0,...,5} {
  \pgfmathtruncatemacro{\j}{mod(\i+1, 6)};
  \draw[edge] (u_\i) -- (u_\j);
}
\node (v_0) at ($(u_1) + (-30:2)$) {};
\node (v_1) at ($(v_0) + (30:2)$) {};
\node (v_2) at ($(v_1) + (90:2)$) {};
\node (v_3) at ($(v_2) + (150:2)$) {};
\node (w_0) at ($(v_1) + (-30:2)$) {};
\node (w_1) at ($(w_0) + (30:2)$) {};
\node (w_2) at ($(w_1) + (90:2)$) {};
\node (w_3) at ($(w_2) + (150:2)$) {};
\draw[edge] (u_1) -- (v_0) -- (v_1) -- (v_2) -- (v_3) -- (u_2);
\draw[edge] (v_1) -- (w_0) -- (w_1) -- (w_2) -- (w_3) -- (v_2);
\end{tikzpicture}
\label{fig:hexaring}
\caption{$(k, 1)$-nanotube}
\end{figure}
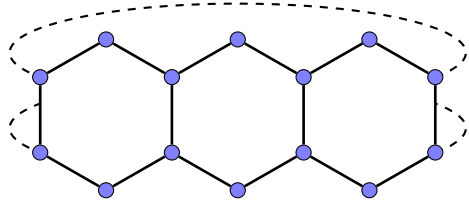

\begin{corollary}
Let $\Pi$ be an arbitrary patch with $k$ vertices of valence 2. The iterated altan $A^n(\Pi)$ is composed of $A(\Pi)$ to which  a $(k,n-1)$-nanotube is attached.
\end{corollary}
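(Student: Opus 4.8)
The plan is to prove the statement by induction on $n$. For the base case $n = 1$ we have $A^1(\Pi) = A(\Pi)$, and a $(k, 0)$-nanotube consists of no rings at all, so there is nothing to attach and the claim holds trivially. The content of the corollary is therefore entirely in propagating the structure through one more application of the altan operation.

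For the inductive step, suppose $A^n(\Pi)$ is obtained from $A(\Pi)$ by attaching a $(k, n-1)$-nanotube. The first thing I would record is that every iterated altan in this sequence is again a patch with exactly $k$ vertices of valence $2$ and boundary-edges code $2^k$. This follows by applying Proposition~\ref{prop:altanpatch} repeatedly: that proposition guarantees that the altan of a patch with $k$ vertices of valence $2$ is a patch with the same number $k$ of valence-$2$ vertices and boundary code $2^k$. In particular $A^n(\Pi)$ itself has boundary code $2^k$, which is the hypothesis needed to invoke the preceding corollary.

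Now form $A^{n+1}(\Pi) = A(A^n(\Pi))$. Because $A^n(\Pi)$ has boundary code $2^k$, the corollary immediately preceding this one shows that the ring of $k$ faces attached to $A^n(\Pi)$ in forming its altan is precisely a $(k, 1)$-nanotube. By the inductive hypothesis, the part of $A^{n+1}(\Pi)$ lying outside $A(\Pi)$ then consists of a $(k, n-1)$-nanotube with one further ring of $k$ hexagons stacked onto its free boundary. Since, by definition, gluing $s$ consecutive rings of $k$ hexagons one on top of the other yields a $(k, s)$-nanotube, the union of these $n$ rings is a $(k, n)$-nanotube, which is exactly the required conclusion.

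The step that needs the most care, and which I regard as the main obstacle, is verifying that the new ring is glued onto the existing nanotube in the manner demanded by the definition of a $(k, s)$-nanotube, rather than merely being an additional disjoint ring of hexagons. Concretely, one must check that the $k$ valence-$2$ vertices of $A^n(\Pi)$---the peripheral root to which the altan operation attaches the new cycle $C$---sit on the outer rim of the previously attached $(k, n-1)$-nanotube in the correct cyclic pattern, so that the new hexagons share edges with the topmost ring exactly as consecutive rings of a nanotube do. This amounts to tracking the identification of the peripheral root through the altan construction and comparing it with the zig-zag boundary produced by the gluing in the definition of the nanotube; once this identification is confirmed, the stacking is consistent and the induction goes through.
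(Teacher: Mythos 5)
Your proof is correct and follows the same route the paper intends: the paper's entire proof is ``It follows easily by induction,'' and your argument is precisely that induction, fleshed out via Proposition~\ref{prop:altanpatch} and the preceding corollary (which supply the base structure and the one-ring-per-step claim). The gluing consistency you flag as the delicate point is indeed the only substantive content, and your treatment of it is at least as careful as the paper's.
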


\begin{proof}
It follows easily by induction.
\end{proof}

\begin{definition}
A patch $\Pi$ is \emph{convex} if its boundary-edges code contains no $1$.
\end{definition}

\begin{proposition}
Let $\Pi$ be a patch. The ring of new faces of $A(\Pi)$ contains only faces of length $\geq 5$ and $A(\Pi)$ is convex. 
\end{proposition}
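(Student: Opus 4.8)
The plan is to separate the two assertions, dispatching convexity almost immediately from the boundary-edges code and reserving the real work for the lower bound on the lengths of the ring faces.

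For convexity I would simply invoke Proposition~\ref{prop:altanpatch}, which records that $A(\Pi)$ has boundary-edges code $2^k$, where $k$ is the number of valence-$2$ vertices of $\Pi$. This code consists solely of the symbol $2$ and in particular contains no $1$, which is exactly the defining condition for a patch to be convex. Hence $A(\Pi)$ is convex, and no further argument is needed for this half of the statement.

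The substance lies in bounding the face lengths. I would fix the cyclically ordered peripheral root $S = (s_0, \ldots, s_{k-1})$ and examine the ring face $F_i$ lying between two consecutive roots $s_i$ and $s_{i+1}$ (indices taken cyclically). Tracing its boundary through the altan construction, $F_i$ is bounded by the spoke edge from $s_i$ to $n+i$, then the two consecutive edges $n+i$ to $n+k+i$ and $n+k+i$ to $n+i+1$ lying on the peripheral cycle $C$ (here $n+k+i \in S_1$ is the intervening valence-$2$ vertex of $C$), then the spoke edge from $n+i+1$ to $s_{i+1}$, and finally the segment $P_i$ of the old perimeter of $\Pi$ running from $s_{i+1}$ back to $s_i$. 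This accounts for exactly four ``new'' edges together with the $|P_i|$ edges of the boundary segment, so the length of $F_i$ equals $4 + |P_i|$. By Proposition~\ref{prop:altanpatch} the ring consists of precisely these $k$ faces, so this analysis is exhaustive.

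It then remains only to note that $|P_i| \geq 1$: since $s_i$ and $s_{i+1}$ are distinct vertices, the boundary path joining them contains at least one edge. Consequently every ring face has length at least $5$, with equality exactly when $s_i$ and $s_{i+1}$ are adjacent on the perimeter of $\Pi$, and with length $6, 7, \ldots$ as the number of valence-$3$ vertices separating them grows. I expect no genuine obstacle beyond getting the edge-accounting right around $F_i$; the one subtlety is recognising that the intervening $S_1$-vertex contributes \emph{two} edges of $C$ rather than one, so that the count of new edges is four and not three. Once this bookkeeping is correct the bound is immediate, and the degenerate small cases cause no trouble (for the single hexagon, for instance, every $|P_i| = 1$ and all ring faces are pentagons, consistent with Example~\ref{ex1}).
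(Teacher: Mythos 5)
Your proof is correct and follows essentially the same route as the paper: the paper counts vertices of each new face (three new peripheral vertices plus at least two old ones, giving length at least $5$) where you count edges (four new edges plus at least one perimeter edge), and both deduce convexity directly from the boundary-edges code $2^k$ of Proposition~\ref{prop:altanpatch}. Your edge-accounting is just a more explicit rendering of the paper's argument.
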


\begin{proof}
Each vertex of valence 2 on the peripheral cycle is adjacent to two vertices of valence 3. Therefore each
new face of $A(\Pi)$ contains at least 3 peripheral vertices. It must contain at least two old vertices. Actually it will
contain two consecutive old vertices if and only if they were adjacent boundary vertices of valence 2 of $\Pi$. 
By definition $A(\Pi)$ is convex by Proposition \ref{prop:altanpatch}.
\end{proof}

\begin{proposition}
\label{prop:faces56convex}
Let $\Pi$ be a patch. The ring of new faces of $A(\Pi)$ contains only faces of length 5 or 6 if and only if $\Pi$ is convex. 
\end{proposition}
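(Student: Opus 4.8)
The plan is to read the length of every new face of $A(\Pi)$ directly off the boundary of $\Pi$, and then to recognise the condition ``every new face has length $5$ or $6$'' as nothing more than a restatement of the absence of the symbol $1$ in the boundary-edges code. The whole argument is a quantitative refinement of the preceding proposition, which already established that each new face has length $\ge 5$, with equality exactly when two consecutive valence-$2$ vertices of $\Pi$ are adjacent.

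First I would pin down the combinatorial type of a single new face. Let $s_i$ and $s_{i+1}$ be two cyclically consecutive valence-$2$ boundary vertices of $\Pi$ (these are exactly the vertices of the peripheral root $S$), joined by spokes to the vertices $n+i,\,n+(i+1)\in S_0$ of the outer cycle $C$. By the altan construction the unique vertex of $S_1$ lying between $n+i$ and $n+(i+1)$ on $C$ is $n+k+i$, so the face separating $\Pi$ from the exterior along this arc is bounded by the two spokes, the two edges $(n+i)(n+k+i)$ and $(n+k+i)(n+(i+1))$ of $C$, and the boundary path of $\Pi$ from $s_{i+1}$ back to $s_i$. Writing $d_i$ for the number of edges of $\partial\Pi$ on that path, this face has length $4+d_i$.

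Next I would express $d_i$ through the valences along $\partial\Pi$. Because $\Pi$ is a patch, every boundary vertex has valence $2$ or $3$, and because $s_i,s_{i+1}$ are \emph{consecutive} among the valence-$2$ vertices, every internal vertex of the path between them has valence $3$; if there are $t_i$ such vertices then $d_i=t_i+1$, so the face has length $5+t_i$. Hence all new faces have length $5$ or $6$ if and only if $t_i\le 1$ for every arc, i.e.\ no boundary arc between consecutive valence-$2$ vertices contains two or more valence-$3$ vertices. Equivalently, no two valence-$3$ boundary vertices of $\Pi$ are adjacent on $\partial\Pi$.

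Finally I would translate this into the boundary-edges code. A symbol $1$ in $b(\Pi)$ records two consecutive valence-$3$ boundary vertices at distance $1$, that is, an adjacent pair; conversely a maximal run of $r\ge 2$ consecutive valence-$3$ vertices contributes $r-1$ symbols equal to $1$. Thus ``no two valence-$3$ vertices are adjacent'' is precisely ``$b(\Pi)$ contains no $1$,'' which is the definition of $\Pi$ being convex; combining this with the equivalence of the previous paragraph closes both directions. The step I expect to be the main obstacle is exactly this dictionary between the code and the cyclic valence pattern on $\partial\Pi$: I must verify the counting for long runs of valence-$3$ vertices, and check the degenerate case in which $\Pi$ has no valence-$3$ vertex at all (i.e.\ $\Pi$ is a cycle), where every $t_i=0$, every new face is a pentagon, and the code carries no $1$, so the equivalence holds consistently. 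Note that $A(\Pi)$ being defined forces $S\neq\emptyset$, which guarantees that each maximal run of valence-$3$ vertices is genuinely flanked by valence-$2$ vertices and hence that $t_i$ is well defined.
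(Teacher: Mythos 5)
Your proof is correct and follows essentially the same route as the paper: the paper's (very terse) proof likewise reduces the statement to the observation that a new face of length $>6$ occurs exactly when the boundary-edges code contains a $1$, building on the previous proposition's treatment of the pentagonal case. Your version simply makes explicit the face-length formula $5+t_i$ and the dictionary between runs of valence-$3$ boundary vertices and occurrences of $1$ in the code, which the paper leaves implicit.
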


\begin{proof}
The pentagons are covered already in the prof of previous Proposition.  A face of length $> 6$ appears if and only
if the boundary edges code contains one or more consecutive $1$s.
\end{proof}

\begin{theorem}
A benzenoid $B$ will give rise to a fullerene nano-tube if and only if it is convex.
\end{theorem}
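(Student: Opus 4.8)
The plan is to reduce the statement to Proposition~\ref{prop:faces56convex} together with the iteration corollary, exploiting the fact that a benzenoid has only hexagonal interior faces. Consequently the fullerene condition -- that every interior face of $A^n(B)$ be a pentagon or a hexagon -- depends solely on the faces created by the altan operations, not on the inherited faces of $B$.

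For the direction ``convex $\Rightarrow$ fullerene nano-tube'', suppose $B$ is convex. By Proposition~\ref{prop:faces56convex} the ring of new faces of $A(B)$ consists only of pentagons and hexagons, so, together with the hexagons inherited from $B$, the patch $A(B)$ is a fullerene patch. By Proposition~\ref{prop:altanpatch} the boundary-edges code of $A(B)$ is $2^k$, hence $A(B)$ is itself convex and its boundary alternates between vertices of valence $2$ and valence $3$. Thus each subsequent altan attaches consecutive peripheral valence-$2$ vertices that are exactly two boundary edges apart, so by the corollary following Proposition~\ref{prop:altanpatch} every new face in these later rings is a hexagon and the attached ring is a $(k,1)$-nanotube. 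By the iteration corollary, $A^n(B)$ is $A(B)$ with a $(k,n-1)$-nanotube of hexagons attached; all of its interior faces are pentagons or hexagons, so $A^n(B)$ is precisely a fullerene-capped nanotube.

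For the converse I would argue by contraposition. If $B$ is not convex, then by Proposition~\ref{prop:faces56convex} the ring of new faces of $A(B)$ contains a face that is neither a pentagon nor a hexagon; recalling that every new altan face has length at least $5$, this face has length at least $7$. It is an interior face of $A(B)$, and since each subsequent altan operation adds vertices and edges only between the current peripheral cycle and the freshly created one, it leaves previously interior faces unchanged. Hence this oversized face persists as an interior face of $A^n(B)$ for every $n \geq 1$, so no $A^n(B)$ is a fullerene patch and $B$ does not give rise to a fullerene nano-tube.

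The only genuine subtlety is the persistence claim in the converse: one must verify that the altan construction never merges, subdivides, or otherwise disturbs an already-interior face. This is immediate from the definition of the operation, in which all new incidences occur between the old peripheral cycle and the new one, but it is the point where care is needed. Granting this bookkeeping, the theorem follows directly from the face-size dichotomy of Proposition~\ref{prop:faces56convex}, and no new combinatorial idea is required.
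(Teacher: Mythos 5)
Your proposal is correct and follows essentially the same route as the paper, which disposes of the theorem in one line by citing Proposition~\ref{prop:faces56convex} and induction; you have simply written out the induction (base case from that proposition, inductive step from the fact that $A(B)$ has boundary-edges code $2^k$ and is therefore convex) and the persistence of an oversized face in the non-convex case. No gaps; your version is just a more explicit rendering of the paper's argument.
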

\begin{proof}
It follows by induction from Proposition \ref{prop:faces56convex}.
\end{proof}

\begin{theorem}
If $\Pi$ is a convex fullerene patch with $p$ pentagons and boundary-edges code:
$b(\Pi) = (2 + a_1)(2 + a_2) \ldots (2 + a_k)$, $a_i \geq 0$, $d = a_1 + a_2 + \ldots + a_k$. Then $d+p = 6$
and $A(\Pi)$ is a fullerene patch with edges boundary code $2^k$ and 6 pentagons.
\end{theorem}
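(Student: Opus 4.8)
The crux is the numerical identity $d+p=6$; once it is available, the assertions about $A(\Pi)$ follow almost formally, so the plan is to prove $d+p=6$ for an arbitrary convex fullerene patch and then feed $A(\Pi)$ back into the same identity.

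First I would translate the boundary code into vertex counts. Let $n_2$ and $n_3$ be the numbers of boundary vertices of valence $2$ and $3$; all interior vertices have valence $3$. The boundary-edges code lists, for each valence-$3$ boundary vertex, the number of perimeter edges up to the next valence-$3$ vertex, so it has exactly $k=n_3$ entries whose sum is the perimeter length $n_2+n_3$. A gap holding $g_i$ valence-$2$ vertices contributes the entry $g_i+1=2+a_i$, whence convexity (no entry $1$) is precisely $a_i\ge 0$; summing the entries gives $2n_3+d=n_2+n_3$, that is
\[
n_2-n_3=d .
\]

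Next I would run a global Euler count. With $p,h,i$ the numbers of pentagons, hexagons and interior vertices, counting incidences two ways gives the degree sum $2E=2n_2+3n_3+3i$ and the face--length sum $2E=5p+6h+(n_2+n_3)$ (the outer face having length $n_2+n_3$), while Euler's relation reads $(n_2+n_3+i)-E+(p+h+1)=2$. Eliminating $E$, $h$ and $i$ collapses these three relations to
\[
n_2-n_3=6-p ,
\]
which is really discrete Gauss--Bonnet: a valence-$2$ boundary vertex is a convex corner and a valence-$3$ one a concave corner, and the total turning $6$ (that is $360^\circ/60^\circ$) is diminished by the curvature $p$ carried by the pentagons. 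Comparing with the previous display gives $d=n_2-n_3=6-p$, i.e.\ $d+p=6$.

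Finally I would treat the altan. By Proposition~\ref{prop:altanpatch}, $A(\Pi)$ is again a patch and its boundary-edges code consists only of $2$'s, so in particular it has no $1$ and $A(\Pi)$ is convex; its interior faces are the (pentagonal and hexagonal) faces of $\Pi$ together with the attached ring, and by Proposition~\ref{prop:faces56convex} that ring consists of pentagons and hexagons exactly because $\Pi$ is convex. Hence $A(\Pi)$ is a convex fullerene patch, and since every entry of its code equals $2$ its defect is $0$; applying $d+p=6$ to $A(\Pi)$ then forces it to have $6$ pentagons. The one step needing genuine care is the Euler computation, and inside it the orientation of the sign in $n_2-n_3=6-p$: interchanging the two vertex types would yield the sign-reversed $d=p-6$, which is impossible, so I would pin the sign down on the convex benzenoid with code $24334$ (there $p=0$, $n_3=5$, $n_2=11$, $d=6$). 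The degenerate patches with $n_3=0$, a single pentagon or hexagon, fall outside the entry-per-valence-$3$-vertex description and are handled by the separate convention assigning benzene the code $6$.
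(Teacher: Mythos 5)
Your proof is correct, but it takes a genuinely different route from the paper's. The paper disposes of the theorem in two sentences by invoking the external fact that a nanotube can only be capped by a fullerene patch with six pentagons, and observing that all the ``action'' happens in the single step from $\Pi$ to $A(\Pi)$; it gives no computation at all. You instead derive $d+p=6$ from scratch: the bookkeeping identity $n_2-n_3=d$ extracted from the boundary-edges code, combined with the Euler/handshake/face-length elimination yielding $n_2-n_3=6-p$ (a discrete Gauss--Bonnet statement, as you note), and then the altan clause follows by feeding the code $2^k$ of $A(\Pi)$ (Proposition~\ref{prop:altanpatch}) and the pentagon/hexagon ring (Proposition~\ref{prop:faces56convex}) back into the same identity with $d'=0$. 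I checked your elimination: from $2E=2n_2+3n_3+3i$, $2E=5p+6h+(n_2+n_3)$ and $V-E+F=2$ one indeed gets $n_2-n_3=6-p$, and your sanity check on the code $24334$ ($n_3=5$, $n_2=11$, $d=6$, $p=0$) pins the sign correctly. What your approach buys is a self-contained, verifiable proof that does not presuppose the cap-classification result (and in fact reproves the $6$-pentagon count for caps as a special case); what the paper's approach buys is brevity and a conceptual link to the nanotube literature. Your flagging of the degenerate $n_3=0$ patches, which escape the per-valence-$3$-vertex code convention, is a point the paper does not address at all.
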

\begin{proof}
This result may be viewed as a consequence of the fact that a nanotube may be capped only by a fullerene patch
having 6 pentagons. All action takes place at the initial step passing from $\Pi$ to $A(\Pi)$ since the width of the tube
cones not change after eventual iterated applications of altan operation. 
\end{proof}

\section{Kekul\'{e} structures of iterated altans}
We have seen that $A^n(G)$ behaves essentially like a capped nanotube. In the first step from $G$ to $A(G)$ all
irregularities happen. After that each $A^{n+1}(G)$ is obtained from $A^n(G)$ by attaching a ring of hexagons to its periphery.

Gutman \cite{gutman2014a} proved the following:

\begin{lemma}[Gutman, 2014]
Let $G$ be a graph having $K = K(G)$ Kekul\'{e} structures. Then any of its altans $A(G)$ has $2K$ i.e.\ twice the number of 
Kekul\'{e} structures.
\end{lemma}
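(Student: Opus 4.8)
The plan is to read a Kekul\'{e} structure as a perfect matching and to exploit the fact that $A(G)=(G_1,S_1)$ is obtained from $G$ by attaching the single cycle $C$ of length $2k$ together with the $k$ spoke edges $(s_i, n+i)$. For brevity write $u_i := n+i \in S_0$ and $w_i := n+k+i \in S_1$, with all indices read modulo $k$; recall from the construction that on $C$ the vertex $w_i$ is inserted between the consecutive $S_0$-vertices $u_i$ and $u_{i+1}$, so that $w_i$ has valence $2$ with neighbours exactly $u_i$ and $u_{i+1}$. First I would fix an arbitrary perfect matching $M$ of $A(G)$ and decompose it canonically as $M = M_G \sqcup M_{\mathrm{sp}} \sqcup M_C$, where $M_G \subseteq E(G)$, where $M_{\mathrm{sp}}$ collects the used spoke edges $(s_i,u_i)$, and where $M_C \subseteq E(C)$. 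Because these three edge families meet pairwise disjoint vertex pairs, the decomposition is forced by $M$. The whole statement then reduces to proving that $M_{\mathrm{sp}} = \emptyset$ for every perfect matching $M$.

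The crux is therefore to show that no spoke edge can ever be used. I would argue by contradiction: suppose $(s_i,u_i)\in M$ for some $i$. Then $u_i$ is matched to $s_i$ and is unavailable on $C$. The valence-$2$ vertex $w_i$, whose only neighbours are $u_i$ and $u_{i+1}$, must then be matched to $u_{i+1}$. This in turn makes $u_{i+1}$ unavailable, forcing the valence-$2$ vertex $w_{i+1}$ (neighbours $u_{i+1},u_{i+2}$) to be matched to $u_{i+2}$, and so on. Propagating this forcing once around the cycle, $w_{i+t}$ is matched to $u_{i+t+1}$ for every $t$, so after $t=k-1$ steps the vertex $w_{i-1}$ is matched to $u_{i+k}=u_i$. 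But $u_i$ is already matched to $s_i$, a contradiction. Hence $M_{\mathrm{sp}}=\emptyset$.

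Once spokes are excluded, the conclusion is immediate bookkeeping. With no spoke used, every vertex of $G$ must be covered by $M_G$, so $M_G$ is a perfect matching of $G$; and every vertex of $S_0\cup S_1$ must be covered by $M_C$, so $M_C$ is a perfect matching of the even cycle $C=C_{2k}$, which has exactly two (the two alternating matchings). Conversely, since $V(G)$ and $S_0\cup S_1$ are disjoint and no spoke is required, any perfect matching of $G$ combined with either perfect matching of $C$ yields a perfect matching of $A(G)$. This sets up a bijection between the perfect matchings of $A(G)$ and the set of pairs (perfect matching of $G$, perfect matching of $C$), of which there are $K \cdot 2$. Therefore $K(A(G)) = 2K$, and the count is visibly independent of the ordering and of the choice of $S$, as remarked in the text.

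The only genuinely nontrivial point is the forcing step in the second paragraph; everything else is elementary counting. The reason that step works is structural rather than computational: $S_1$ consists entirely of degree-$2$ vertices interleaved with the $S_0$-vertices around the single cycle $C$, so committing one spoke determines the matching of the entire cycle deterministically, and the determinism is cyclic, returning to contradict the initial commitment. I expect no difficulty with degenerate cases: if $G$ has no perfect matching then $K=0$ and the bijection shows $K(A(G))=0$ as well, consistent with $2K$.
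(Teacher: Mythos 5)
Your proof is correct. Note that the paper itself offers no argument for this lemma at all: it is stated with the attribution ``Gutman, 2014'' and a citation, so there is no in-paper proof to compare against, and your write-up supplies a complete, self-contained justification of the cited fact. Your decomposition $M = M_G \sqcup M_{\mathrm{sp}} \sqcup M_C$ and the reduction to $M_{\mathrm{sp}}=\emptyset$ are sound, and the cyclic forcing argument does close up correctly ($w_{i+k-1}=w_{i-1}$ is driven onto $u_i$, which is already taken). If you want to shorten the key step, there is a one-line counting version: every vertex of $S_1$ has valence $2$ with both neighbours in $S_0$, so the $k$ vertices of $S_1$ must be matched to $k$ distinct vertices of $S_0$, which exhausts $S_0$ and leaves no endpoint free for any spoke; the rest of your bijection $K(A(G)) = K(G)\cdot K(C_{2k}) = 2K$ then goes through verbatim.
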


We may apply it to the iterated altans:

\begin{theorem}
The number of Kekul\'{e} structures in the $n$-th iterated altan of $G$ is $2^n K(G)$.
\end{theorem}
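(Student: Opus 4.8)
The plan is to prove this by a straightforward induction on $n$, using Gutman's lemma (stated just above) as the inductive engine. The key observation is that the iterated altan is defined by $A^{n+1}(G) = A(A^n(G))$, so each step of the iteration is itself a single altan operation applied to the graph produced at the previous stage. Since Gutman's lemma applies to \emph{any} graph (together with its peripheral root), it applies in particular to the intermediate graph $A^n(G)$, and this is precisely what lets the induction go through.

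First I would establish the base case. For $n = 0$ we have $A^0(G) = G$, which has $K(G) = 2^0 K(G)$ Kekul\'{e} structures; alternatively, one may take $n = 1$ as the base case, where Gutman's lemma directly gives that $A(G)$ has $2K(G) = 2^1 K(G)$ Kekul\'{e} structures.

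For the inductive step I would assume that $A^n(G)$ has exactly $2^n K(G)$ Kekul\'{e} structures. Writing $H = A^n(G)$, so that $K(H) = 2^n K(G)$, Gutman's lemma applied to $H$ yields that its altan $A(H)$ has $2 K(H)$ Kekul\'{e} structures. Since $A(H) = A(A^n(G)) = A^{n+1}(G)$, we obtain
\[
K\bigl(A^{n+1}(G)\bigr) = 2 K(H) = 2 \cdot 2^n K(G) = 2^{n+1} K(G),
\]
which completes the induction and hence the proof.

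There is essentially no serious obstacle here, as the result is a direct telescoping of the doubling property. The only point that warrants a remark is that Gutman's lemma must be invoked with respect to the peripheral root $S_n$ carried along by $A^n(G)$ under the altan operation; but as was already noted in the discussion preceding this section, the number of Kekul\'{e} structures produced by the altan is independent of both the choice of the peripheral root and its cyclic ordering, so the conclusion does not depend on how these roots are selected at each stage.
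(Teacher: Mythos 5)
Your proof is correct and follows exactly the route the paper takes: the paper's own proof is the one-line ``From Gutman's Lemma by induction,'' and your write-up is simply a careful spelling-out of that induction, including the sensible remark about the peripheral root. Nothing to add.
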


\begin{proof}
From Gutman's Lemma by induction.
\end{proof}

This result has many interesting consequences. The first one is confirmation of the result of Sachs et al.\ \cite{sachs1996}
that the number of Kekul\'{e} structures of a $(k,s)$-nanotube is independent of $k$ and is equal to $2^{s+1}$. More generally we may compute the number of Kekul\'{e} structures of any patch extended by a nanotube.

\begin{corollary}
Let $\Pi$ be a patch then $K(A^n(\Pi)) = 2^nK(\Pi)$.
\end{corollary}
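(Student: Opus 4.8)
The plan is to reduce the statement directly to the Theorem immediately preceding it, applying induction on $n$. The earlier theorem establishes that for any graph $G$ with $K(G)$ Kekul\'{e} structures, the $n$-th iterated altan satisfies $K(A^n(G)) = 2^n K(G)$. Since a patch $\Pi$ is, in the end, just a particular kind of graph equipped with a distinguished peripheral root $S$ consisting of its valence-$2$ boundary vertices in cyclic order, the iterated altan operation $A^n(\Pi)$ is nothing other than $A^n$ applied to the underlying peripherally rooted graph. Thus the corollary is an immediate specialization.

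First I would note that by Proposition~\ref{prop:altanpatch}, the altan $A(\Pi)$ of a patch is again a patch with a well-defined peripheral root, so the altan operation is closed on patches and the iteration $A^n(\Pi)$ is legitimate; this is what lets us treat $\Pi$ as an admissible input to the preceding Kekul\'{e}-counting theorem. Next I would invoke that theorem with the graph taken to be the underlying graph of $\Pi$, using the peripheral root prescribed by the valence-$2$ boundary vertices. The theorem then yields $K(A^n(\Pi)) = 2^n K(\Pi)$ directly, with no additional argument required.

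Alternatively, and perhaps more transparently, one can argue by a short induction that mirrors the proof of the preceding theorem. The base case $n = 0$ is the trivial identity $K(\Pi) = K(\Pi)$. For the inductive step, assume $K(A^{n}(\Pi)) = 2^{n} K(\Pi)$; applying Gutman's Lemma to the patch $A^{n}(\Pi)$ gives $K(A^{n+1}(\Pi)) = 2 \cdot K(A^{n}(\Pi)) = 2^{n+1} K(\Pi)$, completing the induction. The only point worth checking is that Gutman's Lemma applies to $A^n(\Pi)$, which it does because each $A^n(\Pi)$ is itself a patch (and in particular a bona fide graph) by the closure observation above.

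The main obstacle, such as it is, is purely bookkeeping rather than mathematical: one must confirm that the choice of peripheral root built into the patch definition is the same root that the altan operation and Gutman's Lemma implicitly use, so that the counting theorem applies verbatim. The excerpt has already remarked that the count $K(A(G))$ is independent of both the ordering and the choice of $S$, so this concern evaporates and no genuine difficulty remains. The proof is therefore a one-line appeal to the preceding theorem, exactly as the paper indicates.
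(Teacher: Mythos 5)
Your proposal is correct and matches the paper's (implicit) argument: the corollary is an immediate specialization of the preceding theorem, which in turn follows from Gutman's Lemma by induction, and your closure observation via Proposition~\ref{prop:altanpatch} is exactly the bookkeeping that makes the specialization legitimate. Nothing further is needed.
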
 

In particular this means that a nanotube capped by six pentagons (half of a dodecahedron) has no Kekul\'{e} structures, while the nanotube capped by half of the buckyball and its 11 Kekul\'{e} structures gives rise to the total of $11 \times 2^n$  Kekul\'{e} structures. We should mention that our nanotubes correspond to a very special \emph{untwisted} case of much more general nanotubes, alias \emph{tubules} considered in the 73 page paper \cite{sachs1996}.

\begin{figure}
\centering
\usetikzlibrary{calc}
\usetikzlibrary{decorations.pathmorphing}
\usetikzlibrary{decorations.pathreplacing}
\begin{tikzpicture}[scale=0.4]
\tikzstyle{every node} = [inner sep=2, draw, circle, fill=blue!50]
\tikzstyle{edge} = [draw, line width=1.0]
\tikzstyle{spoke} = [draw, line width=1.0, decorate, decoration={snake, segment length=2mm, amplitude=1.2pt}, color=blue]
\pgfmathsetmacro{\k}{360 / 6};
\foreach \i in {0,...,5} {
  \node (u_\i) at (30+\k*\i:2.0) {};
}
\foreach \i in {0,...,5} {
  \pgfmathtruncatemacro{\j}{mod(\i+1, 6)};
  \draw[edge] (u_\i) -- (u_\j);
}
\node (a_0) at ($(u_0) + (30:2)$) {};
\node (a_1) at ($(a_0) + (-30:2)$) {};
\node (a_2) at ($(a_1) + (-90:2)$) {};
\node (a_3) at ($(a_2) + (210:2)$) {};
\draw[edge] (u_0) -- (a_0) -- (a_1) -- (a_2) -- (a_3) -- (u_5);
\node (b_0) at ($(a_1) + (30:2)$) {};
\node (b_1) at ($(b_0) + (-30:2)$) {};
\node (b_2) at ($(b_1) + (-90:2)$) {};
\node (b_3) at ($(b_2) + (210:2)$) {};
\draw[edge] (a_1) -- (b_0) -- (b_1) -- (b_2) -- (b_3) -- (a_2);
\node (g_0) at ($(u_1) + (90:2)$) {}; \draw[edge] (g_0) -- (u_1);
\node (g_1) at ($(a_0) + (90:2)$) {}; \draw[edge] (g_1) -- (a_0);
\node (g_2) at ($(b_0) + (90:2)$) {}; \draw[edge] (g_2) -- (b_0);
\node (gg_2) at ($(b_1) + (30:2)$) {}; \draw[edge] (gg_2) -- (b_1);
\node (g_3) at ($(gg_2) + (90:2)$) {}; \draw[edge] (gg_2) -- (g_3);
\node (c_0) at ($(g_0) + (30:2)$) {}; \draw[edge] (g_0) -- (c_0) -- (g_1);
\node (c_1) at ($(g_1) + (30:2)$) {}; \draw[edge] (g_1) -- (c_1) -- (g_2);
\node (c_2) at ($(g_2) + (30:2)$) {}; \draw[edge] (g_2) -- (c_2) -- (g_3);
\node (cc_0) at ($(g_0) + (150:2)$) {}; \draw[edge] (g_0) -- (cc_0);
\node (d_0) at ($(c_0) + (90:2)$) {}; \draw[edge] (c_0) -- (d_0);
\node (d_1) at ($(c_1) + (90:2)$) {}; \draw[edge] (c_1) -- (d_1);
\node (d_2) at ($(c_2) + (90:2)$) {}; \draw[edge] (c_2) -- (d_2);
\node (dd_0) at ($(cc_0) + (90:2)$) {}; \draw[edge] (cc_0) -- (dd_0);
\node (n_0) at ($(dd_0) + (30:2)$) {}; \draw[edge] (dd_0) -- (n_0) -- (d_0);
\node (n_1) at ($(d_0) + (30:2)$) {}; \draw[edge] (d_0) -- (n_1) -- (d_1);
\node (n_2) at ($(d_1) + (30:2)$) {}; \draw[edge] (d_1) -- (n_2) -- (d_2);

\foreach \i in {0,...,5} {
  \node (w_\i) at ($(30+\k*\i:2.0) + (19, 0)$) {};
}
\foreach \i in {0,...,5} {
  \pgfmathtruncatemacro{\j}{mod(\i+1, 6)};
  \draw[edge] (w_\i) -- (w_\j);
}
\node (aa_0) at ($(w_0) + (30:2)$) {};
\draw[edge] (aa_0) -- (w_0);
\node (h_0) at ($(aa_0) + (90:2)$) {};
\node (h_1) at ($(w_1) + (90:2)$) {};
\draw[edge] (aa_0) -- (h_0);
\draw[edge] (w_1) -- (h_1);
\node (h2_0) at ($(h_0) + (150:2)$) {};
\node (h2_1) at ($(h_1) + (150:2)$) {};
\draw[edge] (h_0) -- (h2_0) -- (h_1);
\draw[edge] (h_1) -- (h2_1); 
\node (h3_0) at ($(h2_0) + (90:2)$) {};
\node (h3_1) at ($(h2_1) + (90:2)$) {};
\draw[edge] (h3_1) -- (h2_1);
\draw[edge] (h3_0) -- (h2_0);
\node (h4) at ($(h3_1) + (30:2)$) {};
\draw[edge] (h3_0) -- (h4) -- (h3_1);

\node[fill=none,draw=none] at (13, 0) {$\cdots$};
\node[fill=none,draw=none] at (13, 6) {$\cdots$};

\draw [line width=0.8,decorate,decoration={brace,amplitude=12pt}]
($(h4) + (4.5, 0)$) -- ($(w_4) + (4.5, 0)$) node [fill=none,draw=none,midway,xshift=20pt]  {$s$};

\draw [line width=0.8,decorate,decoration={brace,amplitude=12pt}]
($(dd_0) + (0, 2)$) -- ($(h3_0) + (0, 2)$) node [fill=none,draw=none,midway,yshift=20pt]  {$k$};

\end{tikzpicture}
\label{fig:sfoldhexaring}
\caption{$(k, s)$-nanotube}
\end{figure}
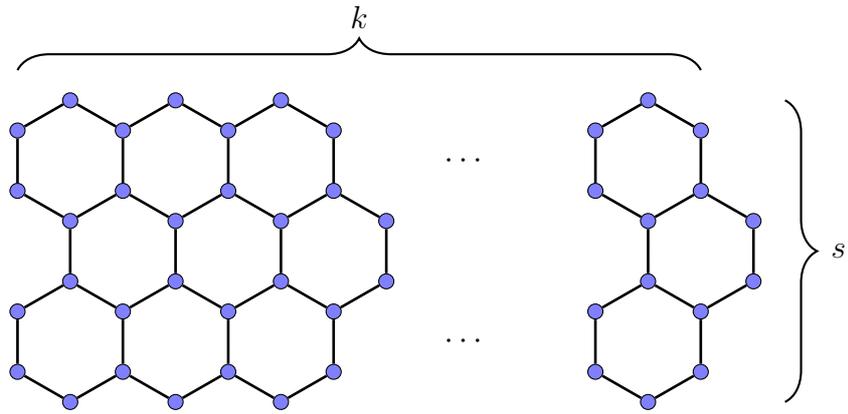

\begin{figure}[!htbp]
\centering
\subfigure[]{
\label{fig:kapica56a}
\begin{tikzpicture}[scale=0.3]
\tikzstyle{every node} = [inner sep=2, draw, circle, fill=blue!50]
\tikzstyle{edge} = [draw, line width=1.0]
\pgfmathsetmacro{\k}{360 / 5};
\foreach \i in {0,...,4} {
  \node (u_\i) at (90+\k*\i:2.0) {};
  \node (v_\i) at (90+\k*\i:4.0) {};
  \draw[edge] (u_\i) -- (v_\i);
}
\foreach \i in {0,...,4} {
  \pgfmathtruncatemacro{\j}{mod(\i+1, 5)};
  \draw[edge] (u_\i) -- (u_\j);
}
\pgfmathsetmacro{\kt}{\k / 3};
\foreach \i in {0,...,4} {
  \pgfmathtruncatemacro{\j}{mod(\i+1, 5)};
  \node (w_\i) at (90+\k*\i+\kt:5.0) {};
  \node (z_\i) at (90+\k*\i+2*\kt:5.0) {};
  \draw[edge] (v_\i) -- (w_\i) -- (z_\i) -- (v_\j);
}
\end{tikzpicture}
}
\subfigure[]{
\label{fig:kapica56b}
\usetikzlibrary{calc}
\begin{tikzpicture}[scale=0.3]
\tikzstyle{every node} = [inner sep=2, draw, circle, fill=blue!50]
\tikzstyle{edge} = [draw, line width=1.0]
\pgfmathsetmacro{\k}{360 / 5};
\foreach \i in {0,...,4} {
  \node (u_\i) at (90+\k*\i:2.0) {};
  \node (v_\i) at (90+\k*\i:4.0) {};
  \draw[edge] (u_\i) -- (v_\i);
}
\foreach \i in {0,...,4} {
  \pgfmathtruncatemacro{\j}{mod(\i+1, 5)};
  \draw[edge] (u_\i) -- (u_\j);
}
\pgfmathsetmacro{\kt}{\k / 3};
\foreach \i in {0,...,4} {
  \pgfmathtruncatemacro{\j}{mod(\i+1, 5)};
  \node (w_\i) at (90+\k*\i+\kt:5.0) {};
  \node (z_\i) at (90+\k*\i+2*\kt:5.0) {};
  \draw[edge] (v_\i) -- (w_\i) -- (z_\i) -- (v_\j);
}
\foreach \i in {0,...,4} {
	\node[fill=none,draw=none] (a_\i) at ($(w_\i) + (-10+90+\k*\i+\kt:2.0)$) {}; \draw[edge] (a_\i) -- (w_\i);
    \node[fill=none,draw=none] (b_\i) at ($(z_\i) + (10+90+\k*\i+2*\kt:2.0)$) {}; \draw[edge] (b_\i) -- (z_\i);
}
\foreach \i in {0,...,4} {
  \pgfmathtruncatemacro{\j}{mod(\i+1, 5)};

}
\end{tikzpicture}
}\\
\subfigure[]{
\label{fig:kapica56c}
\usetikzlibrary{calc}
\begin{tikzpicture}[scale=0.3]
\tikzstyle{every node} = [inner sep=2, draw, circle, fill=blue!50]
\tikzstyle{edge} = [draw, line width=1.0]
\pgfmathsetmacro{\k}{360 / 5};
\foreach \i in {0,...,4} {
  \node (u_\i) at (90+\k*\i:2.0) {};
  \node (v_\i) at (90+\k*\i:4.0) {};
  \draw[edge] (u_\i) -- (v_\i);
}
\foreach \i in {0,...,4} {
  \pgfmathtruncatemacro{\j}{mod(\i+1, 5)};
  \draw[edge] (u_\i) -- (u_\j);
}
\pgfmathsetmacro{\kt}{\k / 3};
\foreach \i in {0,...,4} {
  \pgfmathtruncatemacro{\j}{mod(\i+1, 5)};
  \node (w_\i) at (90+\k*\i+\kt:5.0) {};
  \node (z_\i) at (90+\k*\i+2*\kt:5.0) {};
  \draw[edge] (v_\i) -- (w_\i) -- (z_\i) -- (v_\j);
}
\foreach \i in {0,...,4} {
	\node (a_\i) at ($(w_\i) + (-10+90+\k*\i+\kt:2.0)$) {}; \draw[edge] (a_\i) -- (w_\i);
    \node (b_\i) at ($(z_\i) + (10+90+\k*\i+2*\kt:2.0)$) {}; \draw[edge] (b_\i) -- (z_\i);
    \node (uu_\i) at ($(u_\i) + (90+\k*\i:6.0)$) {};
   \node (vv_\i) at (-36+90+\k*\i:8.0) {};
}
\foreach \i in {0,...,4} {
  \pgfmathtruncatemacro{\j}{mod(\i+1, 5)};
  \draw[edge] (a_\i) -- (vv_\j);
  \draw[edge] (b_\i) -- (vv_\j);
  \draw[edge] (a_\j) -- (uu_\j);
  \draw[edge] (b_\i) -- (uu_\j);

}
\end{tikzpicture}
}
\subfigure[]{
\label{fig:kapica56d}
\usetikzlibrary{calc}
\begin{tikzpicture}[scale=0.3]
\tikzstyle{every node} = [inner sep=2, draw, circle, fill=blue!50]
\tikzstyle{edge} = [draw, line width=1.0]
\pgfmathsetmacro{\k}{360 / 5};
\foreach \i in {0,...,4} {
  \node (u_\i) at (90+\k*\i:2.0) {};
  \node (v_\i) at (90+\k*\i:4.0) {};
  \draw[edge] (u_\i) -- (v_\i);
}
\foreach \i in {0,...,4} {
  \pgfmathtruncatemacro{\j}{mod(\i+1, 5)};
  \draw[edge] (u_\i) -- (u_\j);
}
\pgfmathsetmacro{\kt}{\k / 3};
\foreach \i in {0,...,4} {
  \pgfmathtruncatemacro{\j}{mod(\i+1, 5)};
  \node (w_\i) at (90+\k*\i+\kt:5.0) {};
  \node (z_\i) at (90+\k*\i+2*\kt:5.0) {};
  \draw[edge] (v_\i) -- (w_\i) -- (z_\i) -- (v_\j);
}
\foreach \i in {0,...,4} {
	\node (a_\i) at ($(w_\i) + (-10+90+\k*\i+\kt:2.0)$) {}; \draw[edge] (a_\i) -- (w_\i);
    \node (b_\i) at ($(z_\i) + (10+90+\k*\i+2*\kt:2.0)$) {}; \draw[edge] (b_\i) -- (z_\i);
    \node (uu_\i) at ($(u_\i) + (90+\k*\i:6.0)$) {};
   \node (vv_\i) at (-36+90+\k*\i:8.0) {};
}
\foreach \i in {0,...,4} {
  \pgfmathtruncatemacro{\j}{mod(\i+1, 5)};
  \draw[edge] (a_\i) -- (vv_\j);
  \draw[edge] (b_\i) -- (vv_\j);
  \draw[edge] (a_\j) -- (uu_\j);
  \draw[edge] (b_\i) -- (uu_\j);
}
\foreach \i in {0,...,4} {
   \node[draw=none,fill=none] (yy_\i) at (-36+90+\k*\i:10.0) {}; \draw[edge] (yy_\i) -- (vv_\i);
    \node[draw=none,fill=none] (xx_\i) at ($(uu_\i) + (90+\k*\i:2.0)$) {}; \draw[edge] (xx_\i) -- (uu_\i);

}
\end{tikzpicture}
}
\label{fig:kapica56}
\caption{A cap $\Pi$ with a single pentagon (see (a), (b)) that turns into a bucky-ball dome $A(\Pi)$ with six pentagons (see (c), (d)) after the first altan and to a longer capped nanotube after any additional altan operation.}
\end{figure}
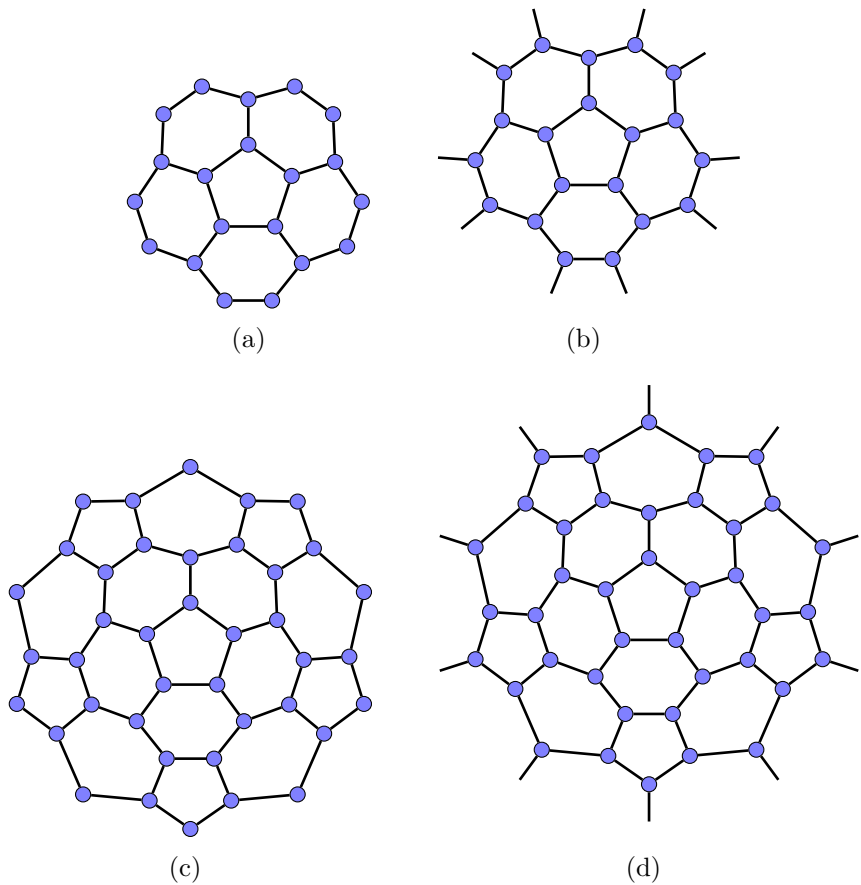

\vspace{\baselineskip}
\noindent
{\bf Acknowledgements:} The authors would like to thank Patrick W.\ Fowler, Jack Graver and Milan Randi\'{c} for useful conversation in preparation of this paper. Research supported in part by the ARRS Grant P1-0294. This material is based upon work supported
by the Air Force Office of Scientific Research, Air Force Materiel
Command, USAF under Award No.\ FA9550-14-1-0096.

\bibliographystyle{amsplain-nodash}
\bibliography{iterated_altans_nb}

\end{document}